\numberwithin{equation}{section}
\DeclareMathOperator{\E}{\mathbb{E}}
\def \P {\mathbb{P}}
\def \R {\mathbb{R}}
\def \Z {\mathbb{Z}}
\def \EE {\mathcal{E}}
\def \AA {\mathcal{A}}
\def \MM {\mathcal{M}}
\def \NN {\mathcal{N}}
\def \BB {\mathcal{B}}
\def \eps {\varepsilon}
\def \ind {{\mathds 1}}
\newtheorem{theorem}{Theorem}[section]
\newtheorem{proposition}[theorem]{Proposition}
\newtheorem{corollary}[theorem]{Corollary}
\newtheorem{lemma}[theorem]{Lemma}
\newtheorem{conjecture}[theorem]{Conjecture}
\theoremstyle{remark}
\newtheorem{remark}[theorem]{Remark}
\begin{document}

\title{Constructive regularization of the random matrix norm}
\author{Elizaveta Rebrova}


\thanks{Affiliation: Department of Mathematics, University of California, Los Angeles (rebrova@math.ucla.edu). Significant part of the work was done when E.R. was a Ph.D. student at the University of Michigan. Work supported in part by Allen Shields Memorial Fellowship.}

\begin{abstract}

We show a simple local norm regularization algorithm that works with high probability. Namely, we prove that if the entries of a $n \times n$ matrix $A$ are i.i.d. symmetrically distributed and have finite second moment, it is enough to zero out a small fraction of the rows and columns of $A$ with largest $L_2$ norms in order to bring the operator norm of $A$ to the almost optimal order $O(\sqrt{\log \log n \cdot n})$. As a corollary, we also obtain a constructive procedure to find a small submatrix of $A$ that one can zero out to achieve the same goal.

This work is a natural continuation of our recent work with R. Vershynin, where we have shown that the norm of $A$ can be reduced to the optimal order $O(\sqrt{n})$ by zeroing out just a small submatrix of $A$, but did not provide a constructive procedure to find this small submatrix.

Our current approach extends the norm regularization techniques developed for the graph adjacency (Bernoulli) matrices in the works of Feige and Ofek, and Le, Levina and Vershynin to the considerably broader class of matrices.
\end{abstract}

\maketitle

\section{Introduction} \label{introduction}

What should we call an \emph{optimal} order of an operator norm of a random $n \times n$ matrix? If we consider a matrix $A$ with independent standard Gaussian entries, then by the classical Bai-Yin law (see, for example, \cite{Tao}) 
$$ 
  \|A\|/\sqrt{n} \to 2 \quad \text{ almost surely, }
$$
as the dimension $n \to \infty$. Moreover, the $2\sqrt{n}$ asymptotic holds for more general classes of matrices. By \cite{Bai-Yin-Kri}, if the entries of $A$ have zero mean and bounded fourth moment, then
$$
\|A\| = (2+o(1)) \sqrt{n}
$$
with high probability. If we are concerned to get an explicit (non-asymptotic) probability estimate for all large enough $n$, an application of Bernstein's inequality (see, for example, in \cite{V, V-HDP}) gives 
$$\P\{\|A\| \le t \sqrt{n}\} \ge 1 - e^{-c_0t^2n} \quad \text{ for } t \ge C_0$$
for the matrices with i.i.d. subgaussian entries. Here, $c_0, C_0 > 0$ are absolute constants. The non-asymptotic extensions to more general distributions are also available, see \cite{Seginer, Latala, BvH, vH}. 

Also, note that the order $\sqrt{n}$ is the best we can generally hope for. Indeed, if the entries of $A$ have variance $C$, then the typical magnitude of the Euclidean norm of a row of $A$ is $ \sim \sqrt{n}$, and the operator norm of $A$ cannot be smaller than that. So, it is natural to assume $O(\sqrt{n})$ as the ``ideal order'' of the operator norm of an $n \times n$ i.i.d. random matrix.

However, if we do not assume that the matrix entries have four finite moments, we do not have ideal order $O(\sqrt{n})$: the weak fourth moment is necessary for the convergence in probability of $\|A\|/\sqrt{n}$ when $n$ grows to infinity (see \cite{Silv}). Moreover, for the matrices with the entries having two finite moments, an explicit family of examples, constructed in \cite{LS}, shows that $A$ can  have $\|A\| \sim O(n^{\alpha})$ for any $\alpha \le 1$ with substantial probability.

\bigskip

This motivates the following questions: what are the obstructions in the structure of $A$ that make its operator norm too large? Under what conditions and how can we regularize the matrix restoring the optimal $O(\sqrt{n})$ norm with high probability? Clearly, interesting regularization would be the one that does not change $A$ too much, for example, that changes only a small fraction of the entries of $A$. We call such regularization \emph{local}.

The first question was answered in our previous work with R. Vershynin (\cite{ReV}). We have shown that one can enforce the norm bound $\|A\| \sim \sqrt{n}$ by modifying the entries in a small submatrix of $A$ if and only if the i.i.d. entries of $A$ have zero moment and finite variance. The proof strategy was to construct a way to regularize $\|.\|_{\infty \to 2}$ norm of $A$, and to apply a form of Grothendieck-Pietsch theorem (see \cite[Proposition 15.11]{LT}) to claim that some additional small correction regularizes the operator norm $\|A\|$. This last step made it impossible to find the submatrix explicitly. 

\bigskip

In the current work we give an (almost optimal) answer to the remaining constructiveness question, namely, \emph{when local regularization is possible, how to fix the norm of $A$ by a small change to the optimal order?} The main result of the paper is 

\begin{theorem}[Constructive regularization] \label{main}
Let $A$ be a random $n~\times~n$ matrix with i.i.d. entries $A_{ij}$ having symmetric distribution such that $\E A_{ij}^2 = 1$. Then for any $\eps \in (0, 1/6]$, $r \ge 1$ with probability $1 - n^{0.1-r}$ the following holds: if we replace with zeros at most $\eps n$ rows and $\eps n$ columns with largest $L_2$-norms (as vectors in $\R^n$), then the resulting matrix  $\tilde{A}$ will have a well-bounded operator norm
\begin{equation}\label{main norm estimate}
\|\tilde A\| \le C r\sqrt{c_{\eps}  n \cdot \ln \ln n}.
\end{equation}
Here $c_{\eps}  =(\ln \eps^{-1})/\eps$ and $C > 0$ is a sufficiently large absolute constant.
\end{theorem}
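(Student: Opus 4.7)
The plan is to adapt the Feige--Ofek and Le--Levina--Vershynin discrepancy framework, originally developed for sparse Bernoulli adjacency matrices, to the present heavy-tailed but variance-normalized setting. Since the entries are symmetric, I would symmetrize by independent Rademacher signs from the start, which decouples magnitudes from signs and allows Khintchine-type manipulations later.

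I would first decompose $A$ entrywise by magnitude: $A = A^{\sharp} + \sum_{k \ge 0} A^{(k)}$, where $A^{\sharp}_{ij} = A_{ij}\ind_{|A_{ij}| \le \sqrt{n}}$ and $A^{(k)}_{ij} = A_{ij}\ind_{2^k\sqrt{n} < |A_{ij}| \le 2^{k+1}\sqrt{n}}$. For the bounded part $A^{\sharp}$, entries have magnitude $\le \sqrt{n}$ and variance $\le 1$, so a direct application of matrix Bernstein (or a symmetrization/truncation argument) gives $\|A^{\sharp}\| = O(\sqrt{n})$ with the required probability. Each heavy scale $A^{(k)}$ is sparse: by Markov, $\P\{|A_{ij}| > 2^k\sqrt{n}\} \le 4^{-k}/n$, so $A^{(k)}$ has at most $\sim n/4^k$ nonzero entries in expectation, each of magnitude $\Theta(2^k\sqrt{n})$.

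The core observation for the regularization is that a row of $A$ containing a single entry of magnitude $\gtrsim \sqrt{n\ln\ln n}$ already has atypically large $L_2$-norm compared to a typical row of size $\sqrt{n}$. A counting/variance argument shows that the number of rows containing such a large entry (at any scale) is with high probability at most $\eps n/2$, and symmetrically for columns, so the regularization procedure simultaneously deletes every row and column that carries a very large entry. This restricts attention to the $O(\ln\ln n)$ scales $k$ with $2^k\sqrt{n} \lesssim \sqrt{n\ln\ln n}$, which is the source of the final $\sqrt{\ln\ln n}$ factor. For each such surviving scale I would bound the restricted heavy matrix by a Feige--Ofek-type discrepancy inequality: control $\ip{\ind_S}{A^{(k)}\ind_T}$ for every $S, T$ among surviving row/column indices, splitting into \emph{light pairs} (where $|S|\,|T|$ is large and a Chernoff/Bernstein bound on the sparse sum applies) and \emph{heavy pairs} (where per-row and per-column sums remain bounded thanks to the regularization). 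A standard $\{-1,0,1\}$-net rounding argument then converts these bilinear bounds into an operator-norm bound per scale, and summing over the $O(\ln\ln n)$ contributing scales with the right weights yields the final estimate with $c_\eps = (\ln\eps^{-1})/\eps$.

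The main obstacle will be the coordination between the \emph{global} regularization rule, which depends on the full $L_2$-norms of rows and columns of $A$, and the \emph{per-scale} structural requirements of the discrepancy argument: the same $\eps n$ deleted rows must simultaneously kill all ``bad'' rows at every relevant scale. Establishing this rigorously requires a careful high-probability analysis of how the distribution of heavy entries interacts with the row-norm ranking — one has to show that the greedy deletion of large-norm rows is as effective as a scale-by-scale oracle deletion would be. A secondary technical challenge is balancing the light/heavy split in the discrepancy step optimally; this is exactly what drives the final factor from the naive $\sqrt{\ln n}$ down to the near-optimal $\sqrt{\ln\ln n}$.
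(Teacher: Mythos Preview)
Your overall architecture is close to the paper's, but there are two concrete gaps.

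First, the bounded part. With threshold $\sqrt{n}$ for $A^{\sharp}$, neither matrix Bernstein nor Bandeira--van~Handel gives $\|A^{\sharp}\|=O(\sqrt{n})$: with $\sigma_*=\sqrt{n}$ the deviation term is $\sqrt{n\ln n}$, not $\sqrt{n}$. The paper sets the small/medium cutoff at $2^{k_0}\asymp\sqrt{n/\ln n}$ precisely so that $\sigma_*^2\asymp n/\ln n$ and Bandeira--van~Handel yields $\|S\|\lesssim\sqrt{n}$ with probability $1-n^{-r}$ (Lemma~\ref{small entries}). This shift of the lower cutoff, together with the upper cutoff $2^{k_1}\asymp\sqrt{c_\eps n}$ (coming from the fact that after deletion all surviving entries satisfy $|A_{ij}|\le C\sqrt{c_\eps n}$, via the inconstructive result of \cite{ReV}), is what makes the number of dyadic levels $\kappa=k_1-k_0\lesssim\log_2\ln n$ and produces the $\sqrt{\ln\ln n}$ factor honestly.

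Second, and more seriously: the ``main obstacle'' you name --- that the single global deletion must simultaneously regularize every scale --- is exactly the crux, and your proposed resolution (arguing that greedy $L_2$-deletion is as good as a scale-by-scale oracle) is not how the paper proceeds, and would be hard to carry out: there is no reason the $\eps n$ rows with largest total $L_2$-norm should contain every row that is over-degree at some individual Bernoulli level $B^k$. The paper sidesteps this entirely via the Decomposition Lemma of Le--Levina--Vershynin (Proposition~\ref{decomposition lemma}): each Bernoulli level $B^k$ is split, with probability $1-3n^{-r}$, into $\BB_1^k$ where all row and column degrees are $\le Crnp_k$ (so the Feige--Ofek discrepancy bound, Lemma~\ref{tail_conds}, applies directly), and two residual pieces $\BB_2^k,\BB_3^k$ with at most $Cr$ nonzeros per row (resp.\ column). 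On $\BB_1^k$ one never needs to know which rows were deleted. On $\BB_2^k,\BB_3^k$ the only input from the regularization is the \emph{global} bound $\|col_j(\tilde A)\|_2,\|row_i(\tilde A)\|_2\le C\sqrt{c_\eps n}$, combined with the constant-per-row sparsity, via the elementary estimate of Lemma~\ref{l2const}. Thus the coordination problem dissolves: the per-scale degree control comes from the random decomposition, not from the deletion, and the deletion is used only through the uniform $L_2$ row/column bound.
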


\begin{remark}\label{regularization by row norm}
Typically, all the rows and columns of the matrix $\tilde{A}$ have $L_2$-norms bounded by $O(\sqrt{c_{\eps}  n})$. One way to check this is via the inconstructive regularization result proved in~\cite{ReV}. Indeed, with probability $1 - 7\exp(- \eps n/12)$, removing some $\eps n \times \eps n$ sub-matrix of $A$, we get a matrix $\bar A$ such that $\|\bar A\| \lesssim \sqrt{c_{\eps}  n}$ (\cite[Theorem~1]{ReV}). It implies that all the rows and columns of $\bar A$ have well-bounded $L_2$-norms (of order at most $\sqrt{c_{\eps} n}$). Since all but $\eps n$ rows and $\eps n$ columns of $A$ coincide with those of $\bar A$, there can be at most $\eps n$ rows and columns in $A$ having larger $L_2$-norms. Thus, regularization described in the statement of Theorem~\ref{main} zeros out them all.

Moreover, the proof Theorem~\ref{main} holds without changes if we define $\tilde A$ as the result of zeroing out of all rows and columns having $L_2$-norm bigger than $C\sqrt{c_{\eps}n}$. As we just discussed, this is an even more delicate change of the matrix $A$ with high probability.
\end{remark}

The regularization procedures discussed above (in Theorem~\ref{main} and Remark~\ref{regularization by row norm}) are local, as they change only a small fraction of the matrix entries. However, they still change more than $\eps n \times \eps n$ submatrix as promised by \cite[Theorem~1.1]{ReV}. As a corollary of Theorem~\ref{main}, we also obtain a polynomial algorithm that regularizes the norm of $A$ with high probability by zeroing out its \emph{small submatrix}. 

This algorithm addresses separately subsets of matrix entries having similar magnitude. We define these subsets via order statistics of i.i.d. samples $A_{ij}$: let $\hat A_1, \ldots, \hat A_{n^2}$ be the non-increasing rearrangement of the entries $A_{ij}$ (in sense of absolute values, namely, $|\hat A_1| \ge \ldots \ge |\hat A_{n^2}|$). Then, 
\begin{equation} \label{aal}
\AA_l := \{\hat A_{ \lceil 2^{l-1} n \eps + 1 \rceil }, \ldots, \hat A_{ \lceil 2^{l} n \eps \rceil }\} \quad \text{ for any } l \in \Z_{\ge 0}. \end{equation}
We are ready to state submatrix regularization algorithm:
\vspace*{0.2cm}

\begin{center}
  \begin{tabular}{l}
    \hline
    \textbf{Algorithm 1: Local norm regularization} \\ \hline
\textbf{Input}: matrix $A = (A_{ij})_{i,j = 1}^n$, constants $\eps, c_{\eps} > 0$, positive integer $l_{\max}$, \\ 
    \;\;\;\;\;\;\;\;\;\;\; disjoint entry subsets $\AA_l$ defined by \eqref{aal} for $l \le l_{\max}$\\
\textbf{Output}: $\tilde A$ - $n \times n$ matrix, regularized version of $A$ \\
    \hline

 \textbf{1.} Zero out $\lceil n \eps/2 \rceil$ entries $A_{ij}$ with the largest absolute values;\\
 \textbf{2.} For $l = 0, \ldots, l_{\max}$ find column index subset $J_l$ in the following way:\\
    \;\;\;  \textbf{2a.} For $j \in [n]$ define $e_j^{row}(\AA_l) := |\{i: A_{ij} \in \AA_l \}|$;\\
 \;\;\;    \textbf{2b.} For every $i, j \in [n]$ define the weight \\
 \;\;\; \;\;\;  \; $
W^l_{ij} := 
\begin{cases}
1,  &\text{if } e_j^{row}(\AA_l)  \le c_\eps np_l\text{ or } A_{ij} \notin \AA_l, \\
c_\eps np_l/e_j^{row}(\AA_l),  &\text{otherwise,}
\end{cases}
$ \\
  \;\;\; \;\;\;  \; where we denoted $p_l = 2^{l}\eps/n$;
\\
 \;\;\;  \textbf{2c.}  Then, define $J_l := \{j: \prod_{i=1}^n W^l_{ij} \le 0.1\}$; \\
  \textbf{3.} Find subset $\hat J$ of $n\eps/4$ indices corresponding to the columns of $A$ \\
  \;\;\;   with the largest $L_2$-norms, define $J := (\cup_l J_l) \cup \hat J$; \\
 \textbf{4.} Repeat Steps 2-3 for $A^T$ to find row subset $I := (\cup_l I_l) \cup \hat I$; \\
  \textbf{5.} Zero out all the entries of $A$ in the product subset $I \times J$ to get $\tilde A$. \\
 \hline
  \end{tabular}
\end{center}

If the matrix $A$ is taken from the same model as in Theorem~\ref{main}, the regularization provided by Algorithm~1 finds $\eps n \times \eps n$ submatrix $I \times J$ that one can replace with zeros to get a matrix $\tilde A$ with well-bounded norm. This is proved in the following

\begin{corollary}[Constructive regularization, submatrix version] \label{subblock regularization}
Let $A$ be a random $n~\times~n$ matrix with i.i.d. entries $A_{ij}$ having symmetric distribution such that $\E A_{ij}^2 = 1$.  Let $\eps \in (0, 1/6]$, $c_{\eps}  =(\ln \eps^{-1})/\eps$ and $l_{\max} = \lfloor\log_2 (\ln n/\ln \eps^{-4})\rfloor$. Subsets $\AA_l$ are defined by \eqref{aal} for $l = 0, \ldots, l_{\max}$.

Suppose that matrix $\tilde A$ is constructed from $A$ by Algorithm~1. Then with probability $1 - n^{0.1-r}$ matrix $\tilde A$ differs from $A$ on at most $\eps n \times \eps n$ submatrix, and 
$$
\|\tilde A\| \le C r^{3/2} \sqrt{c_{\eps}  n \cdot \ln \ln n},
$$
where  and $C > 0$ is a sufficiently large absolute constant.
\end{corollary}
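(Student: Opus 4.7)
The plan is to reduce Corollary~\ref{subblock regularization} to Theorem~\ref{main} via a decomposition trick. I would introduce the auxiliary matrix $\tilde A_{\mathrm{full}}$ obtained from $A$ by zeroing out all rows in $I$ and all columns in $J$ (not just the intersection $I\times J$). An inclusion-exclusion then gives
\[
\tilde A = \tilde A_{\mathrm{full}} + A|_{I \times J^c} + A|_{I^c \times J},
\]
so $\|\tilde A\| \le \|\tilde A_{\mathrm{full}}\| + \|A|_{I \times J^c}\| + \|A|_{I^c \times J}\|$, and the proof breaks into three subtasks: (i) show $|I|,|J|\le \eps n$; (ii) bound $\|\tilde A_{\mathrm{full}}\|$ via Theorem~\ref{main}; (iii) control the two ``residual'' matrices, which enjoy only one-sided regularity.

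For (i), the count $e_j^{\mathrm{row}}(\AA_l)$ is essentially hypergeometric with mean $\approx 2^{l-1}\eps$, whereas the cutoff $c_\eps n p_l = 2^l \ln \eps^{-1}$ exceeds this mean by a factor of order $\ln(\eps^{-1})/\eps \gg 1$. A Chernoff-type tail estimate should then give $|J_l| \le \eps n/(4(l_{\max}+1))$ with probability at least $1 - n^{-r}/(2 l_{\max})$. Summing over $l = 0, \ldots, l_{\max} = O(\ln \ln n)$ yields $\sum_l |J_l| \le 3\eps n/4$, and adding $|\hat J| = n\eps/4$ gives $|J| \le \eps n$; $|I|\le \eps n$ follows by symmetry. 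For (ii), since $\hat I \subseteq I$ and $\hat J \subseteq J$ contain the rows and columns of largest $L_2$-norm (plus some extras that only help), Remark~\ref{regularization by row norm} ensures that the argument of Theorem~\ref{main} applies to $\tilde A_{\mathrm{full}}$, giving $\|\tilde A_{\mathrm{full}}\| \le Cr\sqrt{c_\eps n \ln \ln n}$.

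The hard part is (iii). I focus on $A|_{I\times J^c}$; the other residual is symmetric. Split by shells: $A|_{I \times J^c} = \sum_{l=0}^{l_{\max}} A^{(l)}|_{I \times J^c} + A_{\mathrm{tail}}$, where the entries killed in Step~1 lie above all shells and contribute nothing. For each $l$, the columns of $A^{(l)}|_{I \times J^c}$ (being in $J^c$) have at most $c_\eps n p_l$ shell-$l$ entries, of typical squared magnitude $\sim 1/(n p_l)$. This one-sided (column-only) regularity is not by itself enough for a $\sqrt{c_\eps n}$ spectral bound; the key step I anticipate is to import the bilinear-form / combinatorial spectral estimate from the proof of Theorem~\ref{main} (in the Feige--Ofek / Le--Levina--Vershynin spirit, applied to the $|I|$-row submatrix) to deduce $\|A^{(l)}|_{I\times J^c}\| \le Cr\sqrt{c_\eps n}$ per shell. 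Because the shell supports are entry-disjoint, the matrices $A^{(l)}|_{I\times J^c}$ behave as nearly orthogonal contributions in a noncommutative $L^2$ sense, so summing over $l_{\max} = O(\ln\ln n)$ shells loses only a factor $\sqrt{\ln\ln n}$ rather than $\ln\ln n$; the extra $\sqrt r$ in the claimed $r^{3/2}$ absorbs a union bound over these shells in the high-probability event. The tail matrix $A_{\mathrm{tail}}$ (entries of rank beyond $2^{l_{\max}}n\eps$) has small controlled magnitude, so its norm is handled directly by a Bernstein/NCK estimate applied to the $|I|$-row submatrix.
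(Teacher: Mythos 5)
Your decomposition $\tilde A = \tilde A_{\mathrm{full}} + A|_{I\times J^c} + A|_{I^c\times J}$ and items (i)--(ii) are essentially sound, but item (iii) misidentifies the structure and this is where the plan breaks down. You treat the residual $A|_{I\times J^c}$ as a matrix with only ``one-sided (column-only) regularity'' and propose a fresh shell-by-shell Feige--Ofek argument plus a vague ``noncommutative $L^2$ orthogonality'' to sum the shells. In fact, the whole point of the $J_l$-construction (Lemma~\ref{constructive column cut}) is that the column deletion $J = (\cup_l J_l)\cup \hat J$ already controls \emph{every} row of the $n\times|J^c|$ matrix $A_{[n]\times J^c}$: after removing $J_l$, Lemma~\ref{constructive column cut} guarantees $\sum_{j\in J_l^c} B^l_{ij}\le 10 L n p_l$ for \emph{all} $i\in[n]$, including $i\in I$, and summing over $l$ with the second-moment bound \eqref{secmom} gives $\|\mathrm{row}_i(A^{Large}_{[n]\times\bar J^c})\|_2^2\lesssim c_\eps n$ for every $i$. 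Combined with the $\hat J$-removal bounding the columns and the Bernstein bound on $A^{Small}$, this means $A_{[n]\times J^c}$ has two-sided $L_2$ regularity of all rows and columns, so Remark~\ref{regularization by row norm_s} / Theorem~\ref{main} applies to the $n\times|J^c|$ matrix directly, with no new spectral lemma needed. The paper therefore uses the two-term split $\tilde A = A_{[n]\times J^c} + A_{I^c\times J}$ and bounds each piece by $\|A_{[n]\times J^c}\|$ and $\|A_{I^c\times[n]}\|$ via Theorem~\ref{main}; your residual is a submatrix of the first piece and is already handled.

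As written, item (iii) is a genuine gap: the ``nearly orthogonal in a noncommutative $L^2$ sense'' step is not a theorem for operator norms of matrices with disjoint supports, and the ``import the combinatorial spectral estimate to the $|I|$-row submatrix'' step is exactly the part that needs the row-regularity fact above to go through. You would also want to flip the stated regularity: $J_l$ is built so that deleting it bounds \emph{row} counts, not just column counts. On the smaller points, your Chernoff re-derivation of $|J_l|$ in (i) is unnecessary since Lemma~\ref{constructive column cut} already gives $|J_l|\le n\exp(-2c_\eps n p_l)$, whose sum is a super-geometric series bounded by $\eps n/4$; and the $r^{3/2}$ in the statement comes from the extra $\sqrt r$ that enters the row-norm bound for $A^{Small}$ (Bernstein gives $\lesssim\sqrt{r c_\eps n}$), which then feeds into the application of Theorem~\ref{main}.
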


The fact that the sub-matrix regularization algorithm is more involved than the one presented in Theorem~\ref{main} is somewhat natural. Zeroing out a small  submatrix must still bring the $L_2$-norms of all rows and columns to the order $O(\sqrt{n})$. Since the majority of rows and columns stays untouched in such regularization, essentially, one needs to find the most ``dense" part of the matrix. 

The procedure of assigning weights to the matrix entries  row-wise, multiplying them to set column weights, and then thresholding columns with the low weights is a delicate way to do so. This weight construction was originally used in \cite{ReT, ReV} for the matrices with i.i.d. scaled Bernoulli entries. Here we employ the same construction to regularize the entries at every ``level" independently (by definition, $k$-th ``level" contains the entries of $A$ that belong to $2^{-k}$-quantile of the distribution of $A_{ij}^2$). Additionally, to make the algorithm distribution oblivious, we estimate quantiles by order statistics of the matrix entries (since a random matrix naturally contains $n^2$ samples of the distribution $\xi \sim A_{ij}$). The idea to estimate quantiles of some distribution by the order statistics of a set of samples is both natural and well-known in the statistics literature (see, e.g. \cite{ZZ, OrderStatistics}).

\subsection{Notations and structure of the paper}

We use the following standard notations throughout the paper. Positive absolute constants are denoted $C, C_1, c, c_1$, etc. Their values 
may be different from line to line. We often write $a \lesssim b$ to indicate that $a \le C b$ for some absolute constant $C$.

The discrete interval $\{1,2,\ldots,n\}$ is denoted by $[n]$. Given a finite set $S$, by $|S|$ we denote its cardinality. The standard inner product in $\R^n$ shall be denoted by $\langle\cdot,\cdot\rangle$. Given $p\in[1,\infty]$, $\|\cdot\|_p$ is the standard $\ell_p^n$-norm in $\R^n$. 
Given a matrix $A$, $\|.\|$ denotes the operator $l_2 \to l_2$ norm of the matrix:
$$
\|A\| := \max_{x \in S^{n-1}} \|Ax\|_2.
$$

We write $row_1(A), \ldots, row_n(A) \in \R^m$ to denote the rows of any $m \times n$ matrix $A$ and $col_1(A), \ldots, col_m(A) \in \R^n$ to denote its columns. We are going to use sparsity of the matrices in the proof. We denote by $e_i^{row}(A)$ the number of non-zero entries in the $i$-th row of the matrix $A$, also $e_i^{col}(A)$ denotes the number of non-zero entries in the $i$-th column of $A$.

The rest of the paper is structured as follows. The proof of Theorem~\ref{main} is based on the previously known regularization results developed for Bernoulli random matrices (mainly in the works of Feige and Ofek \cite{FO}, and Le, Levina and Vershynin \cite{LV}). In Section~\ref{bernoulli_section} we review some results specific to the Bernoulli matrices and briefly explain how they will be used later in the text. In Section~\ref{middle section} we show how to extend the Bernoulli techniques to more general class of matrices and prove central Proposition~\ref{middle entries}. In Section~\ref{proof section} we combine these techniques to conclude the proof of Theorem~\ref{main}. In Section~\ref{corollary section} we prove Corollary~\ref{subblock regularization}, and the last Section~\ref{discussion section} contains discussion of the results and related open questions.

\subsection*{Acknowledgement}

The author is grateful to Roman Vershynin for the suggestion to look at the work of Feige and Ofek, helpful and  encouraging discussions, as well as comments related to the presentation of the paper. The author is also grateful to Konstantin Tikhomirov for mentioning the idea to estimate quantiles of the entries distribution from their order statistics, which made Algorithm~1 more elegant.

\section{Auxiliary results for Bernoulli random matrices}\label{bernoulli_section}

General idea of the proof of Theorem~\ref{main} is to split the entries of the matrix $A$ into subsets of entries having similar absolute values, and bound them from above by the properly scaled $0$-$1$ Bernoulli variables. Then we use some known regularization results that hold for Bernoulli random matrices for each subset separately. 

The goal of this section is to review several useful results related to the regularization of the norms of Bernoulli matrices. 

\subsection{Regularization of the norms of Bernoulli random matrices}
Consider a $n \times n$ Bernoulli matrix $B$ with independent $0$-$1$ entries such that $\P\{B_{ij} = 1\} = p$. Since the second moment of its entries $\E B_{ij}^2 \sim p$, from the facts discussed in the beginning of Section~\ref{introduction}, one would expect an ideal operator norm $\|B\| \sim \sqrt{np}$.

This is exactly what happens with high probability when success probability $p$ is large enough ($p \gtrsim \sqrt{\ln n}$, see, e.g. \cite{FO}). If $p \ll \sqrt{\ln n}$ the norm can stay larger than optimal (\cite{KS}). However, it is known that the regularization procedure described in Theorem~\ref{main} works in the case of Bernoulli matrices, and moreover, results in the optimal norm order $\|\tilde B \| \sim \sqrt{np}$. Since all non-zero entries in $B$ have the same size, the $L_2$ norm description can be simplified in terms of the number of non-zero entries in each row or column. 

Namely, Feige an Ofek proved in \cite{FO} that if we zero out all rows and columns that contain more than $C np$ non-zero entries, then the resulting matrix satisfies $\|\tilde B \| \sim \sqrt{np}$. This result was improved by Le, Levina and Vershynin. In \cite{LV}, the authors demonstrate that it is enough to zero out any part of the rows and columns with too many non-zeros, or reweigh them in any way, to satisfy $e^{row}_i \le Cnp$ and $e^{col}_i \le Cnp$ for any $i \in [n]$ to obtain the resulting matrix  $\|\tilde B \| \sim \sqrt{np}$.

\subsection{Regularization and the quadratic form}
So, zeroing out all rows and columns that contain more than $C np$ non-zero entries regularizes the quadratic form
\begin{equation} \label{normsum}
\|B\| = \sup_{u, v \in S^{n-1}}  \big|\sum_{ij} B_{ij} u_i v_j\big|
\end{equation}
 to the optimal order $\sqrt{np}$. 

We will need the following Lemma~\ref{tail_conds}, addressing the part of the sum \eqref{normsum} (over the indices $i,j$ such that $\{|u_iv_j| \ge \sqrt{p/n}\}$). It was first proved by Feige and Ofek in \cite{FO} and later appears in \cite{CRV}. We give a short sketch of its proof here for the sake of completeness and common notations. Let us also emphasize that even though for the regularization procedure introduced in \cite{FO} it is crucial to zero out a \emph{product} subset of the entries, in the framework of the Lemma~\ref{tail_conds} it is possible to zero out \emph{any} subset of the entries.

\begin{lemma} \label{tail_conds}
Let $B$ be a  $n \times n$ Bernoulli matrix with independent $0$-$1$ entries such that $\P\{B_{ij} = 1\} = p$. Let $r \ge 1$. Let $\BB \subset [n] \times [n]$ be an index subset, such that if we zero out all $B_{ij}$ with $(i,j) \notin \BB$, then every row and column of $B$ has at most $C_0rpn$ non-zero entries. Then with probability $1 - n^{-r}$
$$
\sup_{u, v \in S^{n-1}} \sum_{{\begin{subarray}{c}(i,j) \in \BB:\\
    |u_iv_j| \ge \sqrt{p/n}\end{subarray} }} B_{ij} |u_i v_j| \le Cr \sqrt{np},
    $$
    where $C$ is a large enough absolute constant.

\end{lemma}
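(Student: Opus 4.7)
The plan is to adapt the dyadic decomposition strategy of Feige and Ofek. Fix $u, v \in S^{n-1}$ and partition the coordinate indices into shells $X_s := \{i : 2^{-s-1} < |u_i| \le 2^{-s}\}$ and $Y_t := \{j : 2^{-t-1} < |v_j| \le 2^{-t}\}$, for $s, t \ge 0$. The normalizations $\|u\|_2 = \|v\|_2 = 1$ yield $|X_s| \le 4^{s+1}$ and $|Y_t| \le 4^{t+1}$, and the restriction $|u_iv_j| \ge \sqrt{p/n}$ collapses the relevant indices to pairs $(s,t)$ with $2^{s+t} \le \tfrac14\sqrt{n/p}$, a set whose cardinality is $O(\log^2(n/p))$. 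Writing $N(X, Y) := |\{(i,j) \in \BB \cap (X \times Y) : B_{ij} = 1\}|$, the heavy-couple sum is at most $4 \sum_{s,t} 2^{-s-t}\, N(X_s, Y_t)$, and it suffices to control $N(X_s, Y_t)$ uniformly.

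The central ingredient is a uniform combinatorial ``discrepancy'' estimate for $B$: I would prove that with probability at least $1 - n^{-r}$, for every pair of subsets $X, Y \subset [n]$ one has
\begin{equation*}
\text{either} \quad N(X,Y) \le e^2\, p|X||Y|, \quad \text{or} \quad N(X,Y)\,\ln\frac{N(X,Y)}{p|X||Y|} \le C r\, (|X| \vee |Y|)\,\ln\frac{en}{|X| \vee |Y|}.
\end{equation*}
This follows from Bennett's inequality applied to the unrestricted count $|(X \times Y) \cap \{B_{ij}=1\}|$ (which dominates $N(X,Y)$ and has mean at most $p|X||Y|$), together with a union bound over the $\binom{n}{|X|}\binom{n}{|Y|}$ choices of subset pairs of the given sizes, and then over all sizes. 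In parallel, the bounded-degree hypothesis on $\BB$ produces the deterministic estimate $N(X,Y) \le C_0 r p n \cdot \min(|X|, |Y|)$, since each row of $B$ restricted to $\BB$ contributes at most $C_0 rpn$ nonzeros. These two bounds dominate $N(X_s, Y_t)$ in complementary regimes: the discrepancy estimate controls sparse rectangles where $p|X||Y|$ is $O(1)$, while the bounded-degree estimate controls dense rectangles where the expected count is large.

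The conclusion is then a dyadic summation: split $\sum_{s,t} 2^{-s-t}\, N(X_s, Y_t)$ according to whether $p|X_s||Y_t|$ is above or below a fixed constant threshold, apply the appropriate bound in each regime, and collapse the result using the geometric factor $2^{-s-t}$ together with inequalities such as $2^{-s-t}\sqrt{|X_s||Y_t|} \le 4$. Both regimes contribute $O(r\sqrt{np})$ after summing over the $O(\log^2 n)$ admissible pairs $(s,t)$. The main obstacle is calibrating the discrepancy estimate so that the Bennett tail, after a union bound over exponentially many subset pairs, still has total failure probability at most $n^{-r}$: the Bennett deviation rate $N \ln(N/\mu)$ must absorb the combinatorial entropy $\sim (|X| + |Y|)\ln(en/\max(|X|,|Y|))$ with a factor of $r$ to spare. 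This calibration is the technical heart of the Feige--Ofek approach; the subsequent dyadic summation is then a geometric-series exercise.
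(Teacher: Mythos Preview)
Your proposal is correct and follows essentially the same approach as the paper. The paper packages the dyadic-shell summation as a separate deterministic black box (Lemma~\ref{tail_conds_sq_form}, cited from \cite{FO,CRV}) whose hypotheses are precisely your discrepancy dichotomy together with the bounded-degree condition, and then verifies those hypotheses via Chernoff plus a union bound over subset pairs exactly as you outline; you simply unfold that lemma inline.
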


The proof of Lemma~\ref{tail_conds} is based on a technical Lemma~\ref{tail_conds_sq_form} stated below. The proof of Lemma~\ref{tail_conds_sq_form} is completely deterministic and can be found in \cite{FO, CRV}.
 
   \begin{lemma}\label{tail_conds_sq_form}\cite[Lemma 21]{CRV}
Let $B$ be an $n \times n$ matrix with $0$-$1$ elements. Let $p > 0$, such that every row and column of $B$ contains at most $C_0np$ ones.  For index subsets  $S, T \subset [n]$ define 
$$e(S,T):= \sum_{i \in S, j \in T} B_{ij}$$
 (i.e. number of non-zero elements in the submatrix spanned by $S \times T$). Suppose that for any $S, T \subset [n]$ one of the following conditions holds: 
 \begin{enumerate}
  \item[(A)] $ e(S, T) \le C_1 |S||T| p $, or
  \item[(B)] $ e(S, T) \cdot \log\left(\frac{e(S, T)}{ |S||T| p}\right) \le C_2 |T| \log\left(\frac{n}{|T|}\right),$
 \end{enumerate}
  with some constants $C_1$ and $C_2$ independent from $S, T$ and $n$.
 Then for any $u, v \in S^{n-1}$ 
 $$ \sum_{i,j: |u_i v_j| \ge \sqrt{p/n}} B_{ij} |u_iv_j|  \le C \sqrt{n p},$$
 where the constant $C  = \max\{ 16, 3C_0, 32C_1, 32C_3\}$.
   \end{lemma}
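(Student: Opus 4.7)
The plan is to follow the deterministic dyadic level-set argument of Feige and Ofek \cite{FO}, presented in \cite[Lemma~21]{CRV}. By nonnegativity of the summands, replace $u,v$ by $|u|,|v|$ and assume $u_i, v_j \ge 0$. For $s,t \ge 0$ define
$$
U_s := \{i \in [n] : 2^{-s-1} < u_i \le 2^{-s}\}, \qquad V_t := \{j \in [n] : 2^{-t-1} < v_j \le 2^{-t}\},
$$
so that $\|u\|_2 = \|v\|_2 = 1$ forces $|U_s| \le 4^{s+1}$ and $|V_t| \le 4^{t+1}$. Because the sum is restricted to $u_i v_j \ge \sqrt{p/n}$, only pairs with $2^{s+t} \lesssim \sqrt{n/p}$ contribute, and the contribution of the block indexed by $(s,t)$ is at most $2^{-s-t}\, e(U_s, V_t)$. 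The lemma reduces to showing
$$
\Sigma := \sum_{(s,t):\, 2^{s+t} \lesssim \sqrt{n/p}} 2^{-s-t}\, e(U_s, V_t) \;\lesssim\; \sqrt{np}.
$$

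For each pair $(s,t)$ I have three available upper bounds on $e(U_s, V_t)$: (i) hypothesis (A), when it applies, $e \le C_1 |U_s||V_t|p$; (ii) the inversion of hypothesis (B), when it applies, obtained by setting $y := \log(e/(|U_s||V_t|p))$ and splitting on $y \le 1$ (which gives a case-(A) style bound $e \le e\cdot|U_s||V_t|p$) versus $y \ge 1$ (which after dividing by $y$ gives $e(U_s,V_t) \le C_2 |V_t|\log(n/|V_t|)$); and (iii) the row/column degree caps $e(U_s, V_t) \le C_0 np \cdot \min(|U_s|, |V_t|)$, which follow from the global hypothesis that every row and column has at most $C_0 np$ ones. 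The strategy is to use the minimum of these three at each pair $(s,t)$ and then sum.

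The accounting is then done by splitting the pairs $(s,t)$ into regimes based on the sign of $s-t$ and on the proximity to the saturation boundary $2^{s+t} \sim \sqrt{n/p}$. In each regime one of the bounds (i)--(iii) is sharp, and the resulting weighted block contribution decays geometrically in $|s-t|$ and in the distance to the boundary. Summing the three regimes yields $\Sigma \lesssim \sqrt{np}$ with the constant $C = \max\{16, 3C_0, 32C_1, 32C_2\}$ coming out of the arithmetic.

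The main obstacle is precisely that no single bound (i)--(iii) is enough on its own: case (A) alone contributes an extra $\log(n/p)$ factor coming from the count of dyadic pairs near the boundary, and similarly the degree caps alone give an extra $\log$ factor when summed over the other index. It is only by taking the \emph{minimum} of the three estimates — specifically leveraging case (B) on the ``heavy'' pairs where the density ratio $e/|U_s||V_t|p$ is large, and the degree caps on the ``unbalanced'' pairs where $|U_s|$ and $|V_t|$ differ greatly — that the spurious $\log$ factors cancel and one reaches the optimal order $\sqrt{np}$. Verifying this cancellation regime-by-regime is the technical heart of the Feige--Ofek deterministic lemma.
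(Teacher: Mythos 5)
Your proposal follows the Feige--Ofek deterministic dyadic level-set argument, which is exactly the source the paper cites for this lemma (the paper does not reprove it but simply refers to \cite{FO, CRV}). The decomposition into blocks $U_s \times V_t$, the cardinality bound $|U_s| \le 4^{s+1}$ from $\|u\|_2 = 1$, the cutoff $2^{s+t} \lesssim \sqrt{n/p}$, the three competing bounds (hypothesis (A), the inversion of (B) via the split on $\log(e/(|S||T|p)) \gtrless 1$, and the row/column degree caps), and the observation that only the \emph{minimum} of the three avoids the spurious $\log$ factor all correctly reproduce the structure of the referenced argument; what is elided is the explicit regime-by-regime bookkeeping (Feige--Ofek run through roughly six cases), which you rightly flag as the technical bulk, and as a minor note your $32 C_2$ is the correct reading of the paper's $32 C_3$, which is a typo since no $C_3$ is defined.
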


\begin{proof}(of Lemma~\ref{tail_conds}) In view of Lemma~\ref{tail_conds_sq_form} it is enough to show that with probability $1 - n^{-r}$ for every $S, T \subset [n]$ and 
$$e_{\BB}(S,T)= \sum_{(i,j) \in \BB \cap (S\times T)} B_{ij}$$
 one of the conditions $(A)$ and $(B)$ holds. Without loss of generality let us assume that $|T| \ge |S|$. 
 
 If $|T| \ge n/e$, we have
$
 e_{\BB}(S, T) \le |S|\cdot C_0rpn \le C_0rpe|T||S|.
 $
Hence, condition $(A)$ holds with $C_1 = C_0re$.

If both $|S|, |T| < n/e$, then $\P\{\EE\} \le n^{-r}$ for the event 
$$
\EE := \big\{\exists S,T: |S|, |T| < n/e, \, e_{\BB} (S,T) > l_{S,T} p |S| |T|\big\}
$$
and $l_{S,T}$ being a number such that
$$
 l_{S,T} \ln l_{S,T} := \ln \frac{n}{|T|} \cdot \frac{3(r+6) |T|}{p|S||T|}.
$$
 Indeed, the probability estimate for $\EE$ follows from the Chernoff's inequality applied to the sum of independent variables $e(S,T) \ge e_{\BB}(S,T)$, combined with the Stirling formula estimating the number of proper sets $S$ and $T$, and the fact that the function $f(x) = (x/n)^x$ is monotonically increasing on $[1, n/e]$ (see \cite[Section~2.2.5]{FO} for the computation details).
 
Thus, with probability $1 - n^{-r}$, for any $S, T$ , such that $|S|, |T| < n/e$, condition $(B)$ holds:
\begin{align*}
 e_{\BB}(S, T) \cdot \ln\left(\frac{e_{\BB}(S, T)}{ |S||T| p}\right)  &\le l_{S,T} p |S| |T|\cdot \ln l_{S,T} \\
 &\le 3(r+6) |T| \ln \frac{n}{|T|}.
\end{align*}
This concludes Lemma~\ref{tail_conds} with $C  = \max\{ 32C_0re, 100(r+6)\}$.
\end{proof}

\subsection{Decomposition of Bernoulli matrices}\label{bernoulli section}

The idea is to apply an approach developed for Bernoulli matrices for the truncations of the entries of $A$, having absolute values on the same ``level", and then to sum over these ``levels". However, there is an obstacle: there is no simple way to see which rows and columns in the Bernoulli ``levels" have too many non-zeros. Even if we know that the rows and columns of the general matrix $A$ are well-bounded, some of the ``levels" might be too large if the others are small enough.

To address this issue without making regularization procedure more complex, we are going to use an additional structural decomposition for Bernoulli random matrices, first shown in the work of Le, Levina and Vershynin~\cite{LV}. The next proposition is a direct corollary of \cite[Theorem~2.6]{LV}:

\begin{proposition}[Decomposition lemma]\label{decomposition lemma}
Let $B$ be a  $n \times n$ Bernoulli matrix with independent $0$-$1$ entries such that $\P\{B_{ij} = 1\} = p$. Then, for any $n \ge 4$, $p \ge 0$ and $r \ge 1$, with probability at least $1 - 3n^{-r}$ all entries of $B$ can be divided into three disjoint classes $[n] \times [n] = \BB_1 \sqcup \BB_2 \sqcup \BB_3$, such that 
\begin{itemize}
\item $e^{row}_{i}(\BB_1) \le C_1 r^3np$ and $e^{col}_{i}(\BB_1) \le C_1 r^3np$
\item $e^{row}_{i}(\BB_2) \le C_2r$
\item $e^{col}_{i}(\BB_3) \le C_2r,$
\end{itemize}
where $e^{col/row}_{i}(\BB)$ is the number of non-zero elements in $i$-th row or column of $B$ belonging to the class $\BB$ and $C_1$, $C_2$ are absolute constants.
\end{proposition}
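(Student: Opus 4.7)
The plan is to invoke \cite[Theorem~2.6]{LV} as a black box and translate its conclusion to our setting, since the text advertises the proposition as a direct corollary. LLV's theorem provides, with probability at least $1-3n^{-r}$, a decomposition of the nonzero-entry set of $B$ (equivalently, of $[n]\times[n]$) into three classes that correspond exactly to our $\BB_1$, $\BB_2$, $\BB_3$: a ``regular core'' where the row- and column-sums of $B$ restricted to the core are of order $r^3 np$, a ``row-excess'' part where every row carries only $O(r)$ nonzeros, and a ``column-excess'' part where every column carries only $O(r)$ nonzeros.

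The proof plan is therefore largely notational. First I would match LLV's parameters to ours: their expected-degree parameter plays the role of $np$, and their deviation parameter plays the role of $r$. Then I would read off the degree bounds from their decomposition and check that they match $C_1 r^3 np$ and $C_2 r$ up to the choice of absolute constants, which may be absorbed into $C_1$ and $C_2$. I would also verify that the theorem applies over the entire stated range $n \ge 4$, $p \ge 0$, $r \ge 1$; the degenerate case $p=0$ is trivial since $B$ is identically zero and any decomposition works, and the remaining range is covered by LLV directly.

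The main technical content, which I would cite rather than reprove, is the construction of the excess split. After one peels off the abnormally heavy rows and columns of $B$, one is left with a residual bipartite graph whose edges must be partitioned into a row-bounded and a column-bounded piece, with both degree bounds of order only $r$. LLV handle this by a greedy/flow-type combinatorial argument, and this is also where the cubic dependence on $r$ in the core bound appears. For the present purpose this is a black box, and the probability estimate $1-3n^{-r}$ is inherited directly from LLV with no additional union bound needed.
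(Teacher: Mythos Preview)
Your proposal is correct and matches the paper's treatment exactly: the paper does not give an independent proof but simply states the proposition as a direct corollary of \cite[Theorem~2.6]{LV}, with the parameter matching and absorption into $C_1$, $C_2$ left implicit. Your discussion of the degenerate case $p=0$ and the origin of the $r^3$ factor is more detail than the paper supplies, but is consistent with it (and with the paper's Remark noting that the cubic dependence can be improved to linear by revisiting the LLV argument).
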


\begin{remark}\label{r_order}
Following the same methods as was employed in the proof of \cite[Theorem~2.6]{LV}, one can check that Proposition~\ref{decomposition lemma} actually holds with linear (instead of cubic) dependence on $r$, namely  $e^{row}_{i}(\BB_1) \le C_1 rnp$ and $e^{col}_{i}(\BB_1) \le C_1 rnp$.
\end{remark}

\section{From Bernoulli to general matrices}\label{middle section}
The goal of this section is to prove Proposition~\ref{middle entries}, that provides a way to generalize the regularization results known for Bernoulli random matrices to the general case:

\begin{proposition}\label{middle entries}
Suppose $A$ is a random $n \times n$ matrix with i.i.d. symmetric entries $A_{ij}$ with $\E A_{ij}^2 = 1$. Let $\tilde A$ be the resulting matrix after we zeroed out row and column subsets of $A$ in any way, such that 
\begin{equation}
\|row_{i}(\tilde{A})\|^2_2 \le c_{\eps} n \text{ and } \|col_{i}(\tilde{A})\|^2_2 \le c_{\eps} n
\end{equation}
for all $i = 1, \ldots, n$. Let
$$
\tilde M := \tilde A \cdot \ind_{\{|\tilde A_{ij}| \in (2^{k_0}, 2^{k_1}]\}}, \text{ and } k_1 - k_0 =: \kappa.
 $$
Then with probability at least $1 - 10\kappa n^{-r}$ we have
$$
\|\tilde M\| \le C r \sqrt{c_\eps n \kappa}.
$$
Here $c_\eps$ is any positive $\eps$-dependence and $C > 0$ is an absolute constant.
\end{proposition}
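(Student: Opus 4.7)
The proof extends the Feige--Ofek/LLV techniques from Bernoulli matrices to general i.i.d.\ matrices via a dyadic decomposition of entry magnitudes. For each $k\in\{k_0+1,\ldots,k_1\}$ I would set
\[ \tilde M^{(k)}_{ij} := \tilde A_{ij}\ind_{\{|A_{ij}|\in(2^{k-1},2^k]\}},\qquad B^{(k)}_{ij}:=\ind_{\{|A_{ij}|\in(2^{k-1},2^k]\}}, \]
so that $B^{(k)}$ is Bernoulli with parameter $p_k$. Chebyshev's inequality and $\E A_{ij}^2=1$ yield $p_k\le 4^{1-k}$ and, crucially, the summability $\sum_k 4^k p_k \le 4$: this will be the mechanism by which Cauchy--Schwarz over the $\kappa$ levels loses only a factor $\sqrt{\kappa}$ rather than $\kappa$. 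I would then apply Proposition~\ref{decomposition lemma} (with the linear-in-$r$ sparsity of Remark~\ref{r_order}) to each $B^{(k)}$; a union bound over $\kappa$ levels produces, off an event of probability $\le 3\kappa n^{-r}$, simultaneous partitions $[n]^2=\BB_1^{(k)}\sqcup\BB_2^{(k)}\sqcup\BB_3^{(k)}$ with the sparsity properties guaranteed there.

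For the bulk piece $\BB_1^{(k)}$, I fix $u,v\in S^{n-1}$ and at each level split the quadratic form at the threshold $|u_iv_j|=\sqrt{p_k/n}$. On the heavy side use $|\tilde M^{(k)}_{ij}|\le 2^k B^{(k)}_{ij}$ together with Lemma~\ref{tail_conds} applied to $B^{(k)}|_{\BB_1^{(k)}}$, obtaining a $u,v$-uniform bound of order $2^k\cdot r\sqrt{np_k}=r\sqrt{4^k np_k}$; Cauchy--Schwarz across $k$ combined with $\sum 4^k p_k\le 4$ then delivers an aggregate heavy contribution of order $r\sqrt{n\kappa}$. On the light side, I exploit the symmetry of $A_{ij}$: conditional on $|A|$, $\tilde M$ carries independent Rademacher signs, so the aggregated light bilinear form is a sum of independent symmetric variables with each term bounded by $\max_k 2^k\sqrt{p_k/n}\le 2/\sqrt{n}$ and variance at most $c_\eps n$ (via the row $L_2$-hypothesis). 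Bernstein concentration combined with the Feige--Ofek device of truncating the rich coordinates of $u,v$ and taking a net only on the remaining poor coordinates is the plan for making this uniform over the sphere, with target $O(r\sqrt{c_\eps n\kappa})$.

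For the residual pieces $\BB_2^{(k)},\BB_3^{(k)}$ the key observation is to aggregate across levels \emph{before} taking the operator norm. Writing $\tilde R_2:=\sum_k\tilde M^{(k)}|_{\BB_2^{(k)}}$: since the supports of $\tilde M^{(k)}$ are disjoint across $k$ and each $\BB_2^{(k)}$ has at most $C_2 r$ nonzeros per row, the rows of $\tilde R_2$ have at most $C_2 r\kappa$ nonzeros. Cauchy--Schwarz across rows together with the column $L_2$-hypothesis then gives
\[ \|\tilde R_2 v\|_2^2 \le C_2 r\kappa\sum_j v_j^2\|col_j(\tilde R_2)\|_2^2 \le C_2 r\kappa\sum_j v_j^2\|col_j(\tilde A)\|_2^2 \le C_2 rc_\eps n\kappa, \]
so $\|\tilde R_2\|\le \sqrt{C_2 rc_\eps n\kappa}$, and a symmetric argument (using the row $L_2$-hypothesis) controls $\tilde R_3:=\sum_k\tilde M^{(k)}|_{\BB_3^{(k)}}$. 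Summing the heavy bulk, light bulk, and residual contributions yields $\|\tilde M\|\le Cr\sqrt{c_\eps n\kappa}$ with total failure probability at most $10\kappa n^{-r}$.

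The residual step becomes clean once one spots the trick of summing over $k$ before taking operator norms --- a naive per-level triangle inequality would cost an extra factor of $\sqrt{\kappa}$ here. The real technical obstacle I anticipate is the light bulk estimate: a crude $\eps$-net on $S^{n-1}\times S^{n-1}$ combined with Bernstein is too weak (it gives $O(n\sqrt{c_\eps})$, far from the target $O(\sqrt{c_\eps n\kappa})$), so one must carefully import the Feige--Ofek rich/poor coordinate split to keep only a polynomial-in-$n$ union bound and leverage that the maximal light term is of order $1/\sqrt{n}$.
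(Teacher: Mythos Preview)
Your overall architecture matches the paper's: dyadic level decomposition, the LLV Decomposition Lemma per level, Lemma~\ref{tail_conds} for the heavy $\BB_1$ piece with Cauchy--Schwarz across levels, and the crucial trick of aggregating $\BB_2,\BB_3$ across levels before applying the deterministic $L_2$/sparsity estimate (your inequality is exactly Lemma~\ref{l2const}). All of that is right and mirrors the paper.

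Where you diverge is the light part, and you have correctly flagged it as your obstacle --- but it is an obstacle of your own making. The paper handles it far more simply; the two moves you are missing are: (i) take the light/heavy split \emph{before} the $\BB_1/\BB_2/\BB_3$ decomposition, via the single threshold $|M_{ij}||u_iv_j|\le 2/\sqrt n$ on \emph{all} entries (not only those in $\BB_1^{(k)}$); and (ii) do \emph{not} condition on $|A|$. Instead work with the original $M$ (so $\E M_{ij}^2\le 1$), and absorb the data-dependent deletion by a union bound over the $2^{2n}$ possible product subsets $Q$ of surviving rows and columns. For each fixed $u,v\in\NN$ and fixed $Q$, the light sum $\sum u_i M_{ij}\ind_Q\ind_{\{|M_{ij}||u_iv_j|\le 2/\sqrt n\}}v_j$ is a sum of independent mean-zero variables (symmetry keeps the truncation centered) with total variance $\le\sum_{ij}u_i^2v_j^2\,\E M_{ij}^2\le 1$ and each summand bounded by $2/\sqrt n$; Bernstein then gives failure probability $\exp(-cn)$, which comfortably absorbs the $5^{2n}\cdot 2^{2n}$ union bound and yields a light contribution of order $\sqrt n$ (this is Lemma~\ref{light_members}). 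Your conditional-on-$|A|$ route pushes the variance up to $c_\eps n$ precisely because it discards the second-moment hypothesis; combined with your ordering --- which entangles the light part with the random set $\BB_1^{(k)}$, over which no affordable union bound exists --- this is what forces you toward the rich/poor machinery. In the paper's ordering that difficulty simply does not arise.
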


Let us first collect several auxiliary lemmas, that will be used in the proof of Proposition~\ref{middle entries}.
  \begin{lemma}\label{light_members}
Consider a $n \times n$ random matrix $M$ with independent symmetric entries and $\E M^2_{ij} \le 1$. Consider two vectors $u = (u_i)_{i = 1}^n$ and $v = (v_j)_{j =1}^n$ such that $u, v \in S^{n-1}$. Denote the event 
$$ \MM^{light}_{ij} = \{|M_{ij}| |u_iv_j| \le 2/\sqrt{n}\}$$ 
and let $Q \subset [n] \times [n]$ be an index subset.
Then for any constant $C \ge 3$
$$ | \sum_{i j} u_i M_{ij} \ind_{\{(i,j) \in Q\}} \ind_{\MM^{light}_{ij}} v_j | \le C\sqrt{n} $$
with probability at least $1 - 2\exp(-Cn/2)$.
\end{lemma}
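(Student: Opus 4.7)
The plan is to recognize that the sum $S := \sum_{ij} u_i M_{ij} \ind_{\{(i,j)\in Q\}} \ind_{\MM^{light}_{ij}} v_j$ is a sum of independent, bounded, mean-zero random variables and apply Bernstein's inequality. Set
\[
X_{ij} := u_i v_j M_{ij} \ind_{\{(i,j)\in Q\}} \ind_{\MM^{light}_{ij}}.
\]
First I would verify the three key properties of the $X_{ij}$'s. Independence is immediate since the $M_{ij}$'s are jointly independent and $\ind_{\MM^{light}_{ij}}$ depends only on $|M_{ij}|$. The mean-zero property uses symmetry: because $M_{ij}$ has a symmetric distribution and $\ind_{\MM^{light}_{ij}}$ is a function of $|M_{ij}|$, the product $M_{ij} \ind_{\MM^{light}_{ij}}$ is itself symmetric, hence $\E X_{ij} = 0$. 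Boundedness is the defining feature of the light event: $|X_{ij}| \le |u_i v_j M_{ij}| \ind_{\MM^{light}_{ij}} \le 2/\sqrt{n}$ almost surely.

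Next I would bound the total variance. Since $\E M_{ij}^2 \le 1$,
\[
V := \sum_{ij} \E X_{ij}^2 \le \sum_{ij} u_i^2 v_j^2 \, \E M_{ij}^2 \le \|u\|_2^2 \|v\|_2^2 = 1.
\]
Applying Bernstein's inequality to $S = \sum_{ij} X_{ij}$ with $t = C\sqrt{n}$, uniform bound $K = 2/\sqrt{n}$, and $V \le 1$ gives
\[
\P\{|S| \ge C\sqrt{n}\} \le 2\exp\!\left(-\frac{C^2 n / 2}{V + Kt/3}\right) \le 2\exp\!\left(-\frac{C^2 n/2}{1 + 2C/3}\right).
\]

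To match the target probability $2e^{-Cn/2}$, I would use the hypothesis $C \ge 3$: the inequality $1 + 2C/3 \le C$ rearranges to $C \ge 3$, so the exponent is at most $-Cn/2$ precisely under the stated assumption. There is no genuine obstacle here; the lemma is essentially Bernstein's inequality, and the only point that requires a moment's attention is verifying that the truncation by $\ind_{\MM^{light}_{ij}}$ preserves the symmetry of $M_{ij}$ (hence the mean-zero property), which is what prevents having to handle a potentially large centering correction.
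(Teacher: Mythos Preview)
Your proof is correct and essentially identical to the paper's: both define the truncated summands, observe they are independent, centered (by symmetry of $M_{ij}$ and symmetry of the truncation event), and bounded by $2/\sqrt{n}$, then apply Bernstein's inequality with variance bound $1$ and conclude using $1 + 2C/3 \le C$ for $C \ge 3$. The only cosmetic difference is that the paper factors out $u_i v_j$ and calls the remaining variable $R_{ij}$, whereas you absorb the coefficients into $X_{ij}$.
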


\begin{proof}
Let $R_{ij} := M_{ij}\ind_{\{(i,j) \in Q\}} \ind_{\MM^{light}_{ij}}$. Note that $R_{ij}$ are centered due to the symmetric distribution of $M_{ij}$, and they are independent as $M_{ij}$ are. So we can apply Bernstein's inequality for bounded distributions (see, for example, \cite[Theorem 2.8.4]{V-HDP}) to bound the sum:
 $$
 \P\{ | \sum_{ij} u_i R_{ij}  v_j| \ge t\}  \le 2 \exp\left( - \frac{t^2/2}{\sigma^2 + Kt/3}\right), 
 $$
 where 
 $$K = \max_{i,j} |u_i R_{ij} v_j| \le 2/\sqrt{n} \quad \text{ and } \quad \sigma^2 = \sum_{ij} \E(u_i R_{ij} v_j)^2.$$
 Note that  $ \E R^2_{ij} \le  \E M^2_{ij}$, as $R^2_{ij} \le M^2_{ij}$ almost surely, and $\E M^2_{ij} \le 1$. So,
$$
\sigma^2 = \sum_{ij} u^2_i \E R^2_{ij} v^2_j \le \sum_{i,j} u_i^2 v_j^2 = 1,
$$
as $\sum_i u_i^2 = \sum_j v_j^2 = 1$. So, taking $t = C\sqrt{n}$, we obtain
\begin{align*}
 \P\{ | \sum_{(i,j)} u_i M_{ij} \ind_{\{(i,j) \in Q\}} &\ind_{\MM^{light}_{ij}} v_j |  \ge C \sqrt{n}\} \le 2 \exp(-Cn/2) 
\end{align*}
for any $C \ge 3$. This concludes the statement of the lemma.
 \end{proof}

The following lemma is a version of \cite[Lemma~3.3]{LV}.
\begin{lemma}\label{l2const}
  For any matrix $Q$ and vectors $u,v \in S^{n-1}$, we have
  $$
  \sum_{ij} Q_{ij} u_{i} v_{j} \le \ \max_j \|col_j(Q)\|_2 \cdot (\max_{i} e_i^{row}(Q) )^{1/2}.
  $$
\end{lemma}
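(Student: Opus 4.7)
The plan is to reduce the bilinear sum to a product of two manageable pieces by inserting a sparsity indicator and then invoking Cauchy--Schwarz on the double sum over $(i,j)$. Specifically, I would rewrite
$$
\sum_{ij} Q_{ij} u_i v_j \;=\; \sum_{ij} \bigl(Q_{ij} v_j\bigr) \cdot \bigl(u_i \, \ind_{\{Q_{ij} \neq 0\}}\bigr),
$$
which is legitimate because the factor $\ind_{\{Q_{ij}\neq 0\}}$ is $1$ on every $(i,j)$ that actually contributes. This separation is the key step, because it routes the nonzero pattern to the $u$--side while leaving $Q_{ij} v_j$ on the other side.

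Next I would apply Cauchy--Schwarz to the inner product above, yielding
$$
\left|\sum_{ij} Q_{ij} u_i v_j\right| \;\le\; \Bigl(\sum_{ij} Q_{ij}^2 v_j^2\Bigr)^{1/2} \cdot \Bigl(\sum_{ij} u_i^2 \, \ind_{\{Q_{ij}\neq 0\}}\Bigr)^{1/2}.
$$
Then I would identify both sums: grouping the first by columns gives $\sum_j v_j^2 \, \|col_j(Q)\|_2^2 \le \max_j \|col_j(Q)\|_2^2$ since $\|v\|_2 = 1$; grouping the second by rows gives $\sum_i u_i^2 \, e_i^{row}(Q) \le \max_i e_i^{row}(Q)$ since $\|u\|_2 = 1$. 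Multiplying these two bounds produces exactly the claimed inequality.

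There is no real obstacle here: the lemma is essentially a repackaging of Cauchy--Schwarz, and the only thing to be careful about is inserting the correct indicator function so that the right-hand factor counts the row-support rather than contributing a trivial $\|u\|_2^2$. The argument is deterministic and uses nothing beyond the definitions of $col_j(Q)$, $e_i^{row}(Q)$, and the unit norm of $u, v$.
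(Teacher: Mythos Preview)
Your proof is correct and follows essentially the same approach as the paper: insert the sparsity indicator $\ind_{\{Q_{ij}\neq 0\}}$, apply Cauchy--Schwarz, and then bound the two resulting sums using $\|v\|_2=\|u\|_2=1$ together with the definitions of $\|col_j(Q)\|_2$ and $e_i^{row}(Q)$. The only cosmetic difference is that the paper applies Cauchy--Schwarz twice (first over $i$ for each fixed $j$, then over $j$), whereas you apply it once over the full double sum in $\R^{n^2}$; your version is slightly more direct.
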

\begin{proof}
Indeed, 
\begin{align*}
\sum_{ij} Q_{ij} u_{i} v_{j} 
&\le \sum_j v_j (\sum_{i: Q_{ij} \ne 0} Q_{ij} u_i) \\
&\le \sum_j v_j (\sum_{i: Q_{ij} \ne 0} Q^2_{ij})^{1/2} (\sum_{i: Q_{ij} \ne 0} u^2_i)^{1/2} \quad (*) \\
&\le \max_j \|col_j(Q)\|_2  \sum_j v_j (\sum_{i: Q_{ij} \ne 0} u^2_i)^{1/2}\\
&\le \max_j \|col_j(Q)\|_2  (\sum_j v^2_j)^{1/2} (\sum_j \sum_{i: Q_{ij} \ne 0} u^2_i)^{1/2} \quad (*) \\
&\le \max_j \|col_j(Q)\|_2 \cdot 1\cdot (\sum_i u^2_i\sum_{j: Q_{ij} \ne 0} 1)^{1/2} \quad \text{(since $\|v\|_2 = 1$)} \\
&\le \max_j \|col_j(Q)\|_2 (\sum_i u^2_ie_i^{row}(Q))^{1/2}\\
&\le \max_j \|col_j(Q)\|_2 \cdot (\max_{i} e_i^{row}(Q) )^{1/2}. \quad \text{(since $\|u\|_2 = 1$)}
\end{align*}
Steps (*) hold by the Cauchy-Schwarz inequality. Lemma~\ref{l2const} is proved.
\end{proof}

\bigskip

In the proof of Proposition~\ref{middle entries} we are going to use a standard splitting ``by size" of a non-negative random variable. Let $X = 0$ or $X \in (2^{k_0}, 2^{k_1}]$ almost surely. Then, clearly,
$$
X \le \sum_{k = k_0+1}^{k_1} 2^k \ind_{\{X \in (2^{k-1}, 2^k]\}}.
$$
If additionally $\E X^2 \le 1$ and we denote
\begin{equation}\label{pk}
	p_k := \P\{|M_{ij}| \in (2^{k-1}, 2^{k}]\},
\end{equation}
the following estimates hold for $p_k$. First, the sum
\begin{equation}\label{pksumestimate}
\sum p_k 2^{2k} \le 4 \sum p_k 2^{2(k-1)} \le 4\E X^2 \le 4. 
\end{equation}

\bigskip

Now we are ready to prove Proposition~\ref{middle entries}.
\subsection{Proof of Proposition~\ref{middle entries}}

\textbf{Step 1. Net approximation.}

 Let $\NN$ be a $1/2$-net on $S^{n-1}$ with cardinality $|\NN| \le 5^n$ (the existence of such net is a standard fact that can be found, e.g. in \cite{V-HDP}). We will use a simple net approximation of the norm (see, e.g., \cite[Lemma~4.4.1]{V-HDP}), namely,
$$\|\tilde M \| \le 4 \max_{u, v \in \NN} \langle \tilde Mu, v\rangle =  4 \max_{u, v \in \NN} |\sum_{ij} \tilde M_{ij}u_i v_j|.$$
We will split the sum into two parts and bound each of them separately, based on the absolute value of the element. 

Let $M := A \cdot \ind_{\{|A_{ij}| \in (2^{k_0}, 2^{k_1}]\}}$. For any fixed $u, v \in \NN$ and every $i, j \in [n]$ we define an event 
$$\MM^{light}_{ij} = \{|M_{ij}||u_iv_j| \ge 2/\sqrt{n}\}.$$ 
Then,
\begin{align*}
\max_{u, v \in \NN} &|\sum_{i,j} \tilde M_{ij}u_i v_j| \\
&\le \max_{u, v \in \NN} |\sum_{i,j} \tilde M_{ij} (\ind_{\MM^{light}_{ij}} + \ind_{(\MM^{light}_{ij})^c}) u_i v_j|\\
   &\le \max_{u, v \in \NN} |\sum_{i,j} \tilde M_{ij}\ind_{\MM^{light}_{ij}} u_i v_j| + \max_{u, v \in \NN} |\sum_{ij} \tilde M_{ij}\ind_{(\MM^{light}_{ij})^c}u_i v_j|.
\end{align*}

\bigskip

 \textbf{Step 2. Light members.}

By Lemma~\ref{light_members}, for any fixed $u, v \in S^{n-1}$ and a fixed subset of indices $Q$ (assuming that $Q^c$ is a set of rows and columns to delete), 
\begin{equation}\label{est_light}
| \sum_{i,j} u_i M_{ij} \ind_{\{(i,j) \in Q\}} \ind_{\MM^{light}_{ij}} v_j | \le 12\sqrt{n} 
\end{equation}
with probability at most $2\exp(-6n)$. Now, taking union bound over $5^n$ choices for $u \in \NN$, as many choices for $v \in \NN$, and $2^{2n}$ choices for the row and column subset $Q^c$, we obtain that 
\begin{equation}\label{eq: light_part}
\P \{ | \sum_{i,j} u_i \tilde M_{ij}  \ind_{\MM^{light}_{ij}} v_j |\le 12\sqrt{n}\} \ge 1 - 2\exp(-n).
\end{equation}

\bigskip
 \textbf{Step 3. Other members.}
 
 The second sum can be roughly bounded by the sum of absolute values of its members:
 \begin{align*}
    |\sum_{i,j} \tilde M_{ij}&\ind_{(\MM^{light}_{ij})^c}u_i v_j| \\
    &\le \sum_{i,j} |\tilde M_{ij}|\ind_{(\MM^{light}_{ij})^c}|u_i v_j| \\
  &\le \sum_{i,j}\big[\sum_{k = k_0+1}^{k_1}2^k  \ind_{\{|\tilde M_{ij}| \in (2^{k-1}, 2^k]\}}\big] \ind_{\{|M_{ij}||u_iv_j| \ge 2/\sqrt{n}\}}|u_i v_j| 
  \end{align*}
  Note that as long as $\ind_{\{|\tilde M_{ij}| \in (2^{k-1}, 2^k]\}} = 1$ we also have that $|M_{ij}| \le 2^k$. Indeed, $|M_{ij}| > 2^k$ implies either $|\tilde M_{ij}| > 2^k$ or $|\tilde M_{ij}| = 0$. In any case, $|\tilde M_{ij}| \notin (2^{k-1}, 2^k]$. So, the last expression is bounded above by
$$
  \sum_{i,j}\sum_{k = k_0+1}^{k_1}2^k  \ind_{\{|\tilde M_{ij}| \in (2^{k-1}, 2^k]\}} \ind_{\{2^k|u_iv_j| \ge 2/\sqrt{n}\}}|u_i v_j| $$

 Since $\E M_{ij}^2 \le \E A_{ij}^2 = 1$, from \eqref{pksumestimate} we have $2^{1 - k} \ge \sqrt{p_k}$ for any $k$, where $p_k$ is probability of the $k$-th level (defined in \eqref{pk}). 
As a result, in Step 3 we got
\begin{align}\label{second term}
  |\sum_{i,j} \tilde M_{ij}&\ind_{(\MM^{light}_{ij})^c} u_i v_j| \nonumber\\
  &\le \sum_{k = k_0+1}^{k_1} 2^k \sum_{i,j:|u_iv_j| \ge \sqrt{p_k/n}}  \ind_{\{|\tilde M_{ij}| \in (2^{k-1}, 2^k]\}}  |u_i v_j|.  
\end{align}

\bigskip
 \textbf{Step 4. Bernoulli matrices.}
 For each ``size" $k = k_0+1, \ldots, k_1$ let us define a $n \times n$ matrix $B^k$ with independent Bernoulli entries
 $$B^k_{ij}:= \ind_{\{|M_{ij}| \in (2^{k-1}, 2^k]\}}, \quad \E B^k_{ij} = p_k.
 $$
  By Decomposition Lemma~\ref{decomposition lemma} (and Remark~\ref{r_order}), with probability $1 - 3n^{-r}$, the entries of every $B^k$ can be assigned to one of three disjoint classes: $\BB_1^k$, where all rows and columns have at most $C_1r p_k n$ non-zero entries; $\BB_2^k$, where all the columns have at most $C_2 r$ non-zero entries; and $\BB_3^k$, where all the rows have at most $C_2 r$ non-zero entries. We are going to further split the sum \eqref{second term} into three sums containing elements of these three classes, and estimate each of them separately.

\begin{itemize}
\item[$\BB_1:$]
 The part with the entries from $\BB_1^k$   satisfies the conditions of Lemma~\ref{tail_conds}.
 For any $k = k_0 + 1, \ldots, k_1$
 $$
 \sum_{\begin{subarray}{c}(i,j) \in \BB_1^k:\\
    |u_iv_j| \ge \sqrt{p_k/n}\end{subarray} }  \ind_{\{|\tilde M_{ij}| \in (2^{k-1}, 2^k]\}}  |u_i v_j| \le \sum_{{\begin{subarray}{c}(i,j) \in \BB_1^k:\\
    |u_iv_j| \ge \sqrt{p_k/n}\end{subarray} }} B_{ij}^k |u_i v_j|.
$$
By Lemma~\ref{tail_conds}, this sum is bounded by $C r\sqrt{p_k n}$ with probability at least $1 - n^{-r} $. So, for any $u, v \in S^{n-1}$
$$
\sum_{k = k_0+1}^{k_1} 2^k \sum_{{\begin{subarray}{c}(i,j) \in \BB_1^k:\\
    |u_iv_j| \ge \sqrt{p_k/n}\end{subarray} }} B_{ij}^k |u_i v_j| \le Cr\sqrt{n} \sum_{k = k_0+1}^{k_1} 2^k \sqrt{p_k}.
$$
Then, by the Cauchy-Schwarz inequality and estimate \eqref{pksumestimate},
$$
 \sum_{k = k_0+1}^{k_1} 2^k \sqrt{p_k}  \le \sqrt{\sum_{k = k_0+1}^{k_1}  2^{2k} p_k} \sqrt{\sum_{k = k_0+1}^{k_1}  1} \le 2\sqrt{\kappa}.
$$

\item[$\BB_2:$]
The part with the entries from $\BB_2^k$ can be estimated by Lemma~\ref{l2const}. We have that
$$
 \sum_{k = k_0+1}^{k_1} 2^k \sum_{{\begin{subarray}{c}(i,j) \in \BB_2^k:\\
    |u_iv_j| \ge \sqrt{p_k/n}\end{subarray} }}  \ind_{\{|\tilde M_{ij}| \in (2^{k-1}, 2^k]\}}  |u_i v_j| \le \sum_{i,j} Q_{ij} |u_i v_j|, 
$$
where
$$
Q_{ij} := \sum_{k = k_0+1}^{k_1} 2^k \ind_{\{(i,j) \in \BB_2\}} \ind_{\{|\tilde M_{ij}| \in (2^{k-1}, 2^k]\}} 
$$
Note that for every fixed $i = 1, \ldots, n$, number of non-zero entries $Q_{ij}$ in the row $i$ is at most $C_2r \kappa$. Also, $|Q_{ij}| \le 2 |\tilde M_{ij}| \le 2 |\tilde A_{ij}|$ almost surely, so maximum $L_2$-norm of the column of $Q$ is $\sqrt{c_{\eps} n}$. By Lemma~\ref{l2const}, this implies that for any $u, v \in S^{n-1}$
$$\sum_{i,j} Q_{ij} |u_i v_j| \le \sqrt{C_2 r \kappa c_{\eps} n}$$

\item[$\BB_3:$] The part with the entries from $\BB_3^k$ can be estimated in the same way as $\BB_2^k$, repeating the argument for $A^T$.

\end{itemize}

\textbf{Step 5. Conclusion.}
Now we can combine the estimates obtained for the light members~\eqref{eq: light_part} and all three parts of the non-light sum, to get that 
$$
\|\tilde M \| \le 12 \sqrt{n} + C r \sqrt{\kappa n}  + 2\sqrt{C_2 r \kappa c_{\eps} n} \lesssim  r \sqrt{c_{\eps} \kappa n } 
$$
with probability at least $1 - 2 e^{-n} - 3\kappa n^{-r} - 6 n^{-r} \ge 1 - 10 n^{-r}\kappa$ for all $n$ large enough. Proposition~\ref{middle entries} is proved.

\section{Conclusions: proof of Theorem~\ref{main} and further directions} \label{proof section}
In this section we conclude the proof of Theorem~\ref{main}. As we have seen in the previous section, splitting the entries of the matrix $A$ into $\kappa$ ``levels" with the similar absolute value produces extra $\sqrt{\kappa}$ factor in the norm estimate. Hence, we care to make the number of levels as small as possible. We are going to show that this number can be as small as $C\ln \ln n$, where $n$ is the size of the matrix. The reason is that we need only to consider the ``average" entries of the matrix, those with the absolute values between $O(\sqrt{n / \ln n})$ and $O(\sqrt{n})$. The ``large" entries will be all replaced by zeros in regularization, and restriction to the ``small" entries creates a matrix with the optimal norm (no regularization is needed). One way to check this is applying the following result of Bandeira and Van Handel:

\begin{theorem}\label{theor: bvh}\cite[Lemma 21]{CRV} 
Let $X$ be an $n \times n$ matrix whose entries $X_{ij}$ are independent centered random variables. For any $\eps \in (0, 1/2]$ there exists a constant $c_{\eps}$ such that for every $t \ge 0$
$$
  \P\{\|X\| \ge (1 + \eps) 6\sigma + t \} \le n \exp( - t^2 / c_{\eps} \sigma_*^2),
$$
where $\sigma$ is a maxium expected row and column norm:
$$ 
\sigma^2 := \max(\sigma^2_1, \sigma^2_2), \text{ where } \quad \sigma^2_1 = \max_i \sum_j \E (X_{ij}^2), \quad  \sigma^2_2 = \max_j \sum_i \E (X_{ij}^2);
$$
and $ \sigma_*$ is a maximum absolute value of an entriy:
$$
 \sigma_* := \max_{ij} \|X_{ij}\|_{\infty}.
$$

\end{theorem}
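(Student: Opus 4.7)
The plan is to obtain the stated tail bound as the combination of two ingredients: (i) a sharp estimate of the expected operator norm and (ii) a dimension-aware concentration inequality for $\|X\|$ at scale $\sigma_{*}$. First I pass to the Hermitian setting via the self-adjoint dilation $\tilde X = \bigl(\begin{smallmatrix} 0 & X \\ X^{*} & 0 \end{smallmatrix}\bigr)$, a $2n\times 2n$ Hermitian matrix satisfying $\|\tilde X\| = \|X\|$ and whose independent entries inherit the same parameters $\sigma$ and $\sigma_{*}$ (up to absolute constants).

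For the expectation, write $\tilde X = \sum_{i \le j} X_{ij}\, Z_{ij}$ with $Z_{ij}$ a deterministic rank-at-most-two Hermitian matrix. A routine application of matrix Khintchine or matrix Bernstein yields $\E\|\tilde X\| \lesssim \sigma\sqrt{\log n} + \sigma_{*}\log n$, which is too weak. To remove the $\sqrt{\log n}$ from the $\sigma$-term and to reach the explicit constant $(1+\eps)\cdot 6$, I would invoke the Bandeira--Van Handel refinement: after a Gaussian symmetrization, compare the spectrum of the resulting Gaussian-entry matrix with a model whose norm is known to be $\approx 2\sigma$ via a Slepian--Gordon inequality, and transfer the bound back by a slicing/interpolation argument that localizes the deviation from $2\sigma$ into a term controlled by $\sigma_{*}\sqrt{\log n}$. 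The outcome is
\[
  \E\|X\| \;\le\; (1+\eps)\cdot 6\sigma \;+\; C_{\eps}\,\sigma_{*}\sqrt{\log n}.
\]

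For the tail, observe that $X \mapsto \|X\|$ is a convex $1$-Lipschitz function of the Frobenius vector of entries and $|X_{ij}| \le \sigma_{*}$ almost surely. Talagrand's convex concentration inequality for product measures on bounded intervals (or a bounded-differences argument after a mild truncation if the entries are unbounded) gives
\[
  \P\bigl\{\,\bigl|\|X\| - \E\|X\|\bigr| \ge s\,\bigr\} \;\le\; 2\exp(-c s^{2}/\sigma_{*}^{2}).
\]
Setting $s = t + C_{\eps}\sigma_{*}\sqrt{\log n}$ and absorbing the lower-order $\sqrt{\log n}$ correction into the prefactor via $e^{\log n} = n$ produces the advertised inequality $\P\{\|X\| \ge (1+\eps)6\sigma + t\} \le n\exp(-t^{2}/c_{\eps}\sigma_{*}^{2})$, at the cost of inflating $c_{\eps}$.

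The principal obstacle is the expectation step: the prefactor $6$ and, more importantly, the absence of any logarithm multiplying $\sigma$ is not accessible through matrix Bernstein alone. It requires the Gaussian comparison and free-probability input at the heart of the Bandeira--Van Handel method, together with careful tracking of the $\eps$-dependence through the slicing argument. Once that sharp expectation bound is in hand, the concentration half is essentially routine convex-Lipschitz concentration.
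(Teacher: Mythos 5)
The paper does not prove Theorem~\ref{theor: bvh}: it is quoted as a known result, essentially Corollary~3.9 of Bandeira--van~Handel \cite{BvH} transferred to non-symmetric matrices via dilation (the \cite{CRV} citation attached to the statement looks like a copy-paste slip from Lemma~\ref{tail_conds_sq_form}; the surrounding prose correctly names Bandeira and van~Handel). So there is no in-paper argument to compare against; I am reviewing your sketch on its own merits.

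Your two-step architecture---a sharp mean bound $\E\|X\|\le C(1+\eps)\sigma+C_\eps\sigma_*\sqrt{\log n}$ followed by convex-Lipschitz concentration at scale $\sigma_*$, with the $\sqrt{\log n}$ shift absorbed into the prefactor $n$ because $e^{\log n}=n$---is exactly how BvH pass from their expectation estimate to a tail bound, and the absorption arithmetic you outline is correct: for $t\gtrsim\sigma_*\sqrt{\log n}$ the Gaussian tail dominates the shift, and for smaller $t$ the claimed right-hand side already exceeds~$1$. Two points in the expectation step are misdescribed, however. The sharp upper bound on $\E\|X\|$ in BvH is \emph{not} obtained by a Slepian--Gordon comparison, a slicing/interpolation argument, or ``free-probability input''; it is the moment method, namely a combinatorial estimate of $\E\operatorname{tr}(X^{2p})$ with $p\sim\log n$, applied after a standard symmetrization to Gaussian entries. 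Slepian/Sudakov--Fernique appears in that paper only for the matching \emph{lower} bounds. So the heavy lifting you correctly identify as unavoidable is of a different nature than you describe; matrix Khintchine/Bernstein being ``too weak'' is right, but the remedy is trace-moment combinatorics, not Gaussian process comparison. Second, you hedge with ``a bounded-differences argument after a mild truncation if the entries are unbounded,'' but the hypothesis $\sigma_*=\max_{ij}\|X_{ij}\|_\infty<\infty$ already forces the entries to be bounded almost surely, so Talagrand's convex concentration applies directly with no truncation; the only routine step you leave implicit is passing from concentration about the median to concentration about the mean. Neither issue breaks the plan, but a clean write-up should replace the Slepian/free-probability story with the moment method and drop the truncation caveat. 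Finally, note that the stated leading constant $6(1+\eps)$ is more generous than what a dilation of BvH's symmetric result actually yields (roughly $2(1+\eps)$ with a $2n$ prefactor), so the statement you are asked to prove is a weakened form of the cited result; your sketch is compatible with that.
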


\begin{lemma}\label{small entries}
Suppose $S$ is a random $n \times n$ matrix with i.i.d. mean zero entries $S_{ij}$, such that $\E S_{ij}^2 \le 1$ and $|S_{ij}| < \bar c\sqrt{n}/\sqrt{\ln n}$. Let $r \ge 1$.
If $\bar c < c$, then with probability at least $1 - n^{-r}$
$$
\|S\| \le 13r \sqrt{n}.
$$
Here $c$ is a small enough absolute constant.
\end{lemma}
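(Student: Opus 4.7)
The proof will be a direct invocation of Theorem~\ref{theor: bvh} (Bandeira--Van Handel), for which the entries of $S$ satisfy all the required hypotheses: they are independent, centered, uniformly bounded by $\bar c \sqrt{n/\ln n}$, and have variance proxies at most $1$. The whole game is to read off the parameters $\sigma$ and $\sigma_*$ from the hypotheses and choose $t$ so the probability bound beats $n^{-r}$.

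First I would compute the two variance parameters. Since $\E S_{ij}^2 \le 1$, the row and column variance sums are each bounded by $n$, so $\sigma^2 = \max(\sigma_1^2,\sigma_2^2) \le n$, i.e.\ $\sigma \le \sqrt{n}$. Meanwhile, by the almost-sure bound on the entries, $\sigma_* \le \bar c \sqrt{n/\ln n}$. Fix $\eps = 1/6$, so $(1+\eps)6\sigma \le 7\sqrt{n}$, and set $t = 6 r\sqrt{n}$. Theorem~\ref{theor: bvh} then yields
\[
\P\bigl\{\|S\| \ge 7\sqrt{n} + 6r\sqrt{n}\bigr\} \le n\exp\!\left(-\frac{36 r^2 n}{c_{\eps}\, \bar c^{\,2} n/\ln n}\right) = n \exp\!\left(-\frac{36 r^2 \ln n}{c_{\eps}\bar c^{\,2}}\right).
\]
Since $r \ge 1$, we have $7\sqrt{n} + 6r\sqrt{n} \le 13 r\sqrt{n}$, so the left-hand event already gives the desired norm bound.

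It remains to check that, for $\bar c$ sufficiently small in terms of the constant $c_{\eps}$ from Theorem~\ref{theor: bvh}, the right-hand side is at most $n^{-r}$. Taking logarithms, this amounts to $36 r^2/(c_{\eps}\bar c^{\,2}) \ge r+1$, which (using $r \ge 1$) is implied by $\bar c \le c := 1/\sqrt{c_{\eps}}$ (up to a harmless absolute factor absorbed into $c$). Under this choice of $c$, the statement follows.

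Conceptually there is no obstacle here -- Theorem~\ref{theor: bvh} is essentially tailored to this regime, where the entry-wise bound is small compared to the typical row norm. The only care needed is constant bookkeeping: verifying that the freedom $\bar c \le c$ simultaneously keeps the Gaussian-type tail stronger than $n^{-r}$ for all $r\ge 1$, and that the deterministic part $(1+\eps)6\sigma$ plus $t$ fits under the target $13 r \sqrt{n}$.
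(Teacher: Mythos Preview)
Your proof is correct and follows the same approach as the paper: a direct application of Theorem~\ref{theor: bvh} with $\sigma \le \sqrt{n}$, $\sigma_* \le \bar c\sqrt{n/\ln n}$, and a suitable choice of $\eps$ and $t$. The paper's proof is terser (it just names the parameters $t = r\sqrt{n}$, $\eps = 1$, and $c = 1/(11 c_1)$), but the substance is identical; your choice $\eps = 1/6$ even has the minor advantage of actually lying in the range $(0,1/2]$ stated in Theorem~\ref{theor: bvh}.
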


\begin{proof}
The proof follows directly from Theorem~\ref{theor: bvh} with $t = r\sqrt{n}$ and $\eps = 1$. It is enough to take $c = 1/11c_1$, where $c_1$ is a constant from the statement of Theorem~\ref{theor: bvh}. 

\end{proof}

Now we are ready to prove Theorem~\ref{main}.

\subsection{Proof of Theorem~\ref{main}}

 Let us decompose $A$ into a sum of three $n \times n$ matrices
\begin{equation}\label{general split}
A := S + M + L,
\end{equation}
where $S$ contains the entries of $A$ that satisfy $|A_{ij}| \le 2^{k_0}$, the matrix $M$ contains the entries for which $2^{k_0} < |A_{ij}| \le 2^{k_1}$,
and $L$ contains large entries, satisfying $|A_{ij}| > 2^{k_1}$. Here, 
$$k_0 := \left\lfloor \frac{1}{2} \log_2 \frac{c_1 n}{\ln n} \right\rfloor, \quad k_1:=  \left\lceil \frac{1}{2}\log_2 (C_2 c_\eps n)\right\rceil, \, \text{ where } c_\eps = (\ln \eps^{-1})/\eps
$$ 
and $c_1, C_2 > 0$ are absolute constants. 

Note that $S$, $M$ and $L$ inherit essential properties of the matrix $A$. First, they also have i.i.d. entries (since they are obtained by independent individual truncations from the i.i.d. elements $A_{ij}$). Due to the symmetric distribution of the entries of $A$, the entries of $S$, $M$ and $L$ have mean zero. Also, their second moment is bounded from above by $\E A_{ij}^2 = 1$. 

Note that all the entries in $S$ satisfy $|A_{ij}| \le \sqrt{c_1 n/\ln n}$. Thus, as long as we choose constant $c_1$ small enough to satisfy the condition in Lemma~\ref{small entries}, the norm of $S$ can be estimated as
\begin{equation}\label{norm1}
 \P \{\|S\| > 13 \sqrt{n}\} < n^{-r}.
\end{equation}
Clearly, replacing by zeros some rows and column subset can only decrease the norm of $S$.

By Remark~\ref{regularization by row norm}, with probability at least $ 1 - 7 \exp(\eps n/12)$, all rows and columns of $\tilde{A}$ have bounded norms: for $i = 1, \ldots, n$
$$\|row_i(\tilde{A})\|_2 \le C \sqrt{c_\eps  n} \text{ and }  \|col_i(\tilde{A})\|_2 \le C \sqrt{c_\eps n}.$$
In particular, it implies that all the entries of $\tilde A$ have absolute values bound by $C \sqrt{c_\eps n}$. So, by taking constant $C_2 \ge C^2$, we achieve that $L$ will be empty after the regularization.

 Proposition~\ref{middle entries} estimates the norm of $M$ after the regularization (zeroing out row and columns with large norms):
\begin{equation}\label{norm2}
\P \{\|\tilde A \cdot \ind_{\{|\tilde A_{ij}| \in (2^{k_0}, 2^{k_1}]\}}\| > C c_{\eps} r \sqrt{n \kappa} \} \le 10n^{-r}\kappa.
\end{equation}
By definition,
$$
\kappa := k_1 - k_0 \le \frac{1}{2} \left[\log_2 (C_2 c_\eps n) - \log_2 \frac{c_1n }{\ln n} + 2\right] \le \log_2 \ln n
$$
for all large enough $n$.

Using triangle inequality to combine norm estimates~\eqref{norm1} and \eqref{norm2}, we get $\| \tilde A\| \lesssim c_{\eps} r \sqrt{n \cdot \ln \ln n }$
with probability at least 
$$
1- n^{-r} - 7e^{-\eps n/12} - 10n^{-r}\ln\ln n \ge 1 - n^{-r +0.1}
$$ 
for all $n$ large enough. This concludes the proof of Theorem~\ref{main}.

\begin{remark}\label{regularization by row norm_s}
Note that the only place in the argument where it matters what entry subset we replace by zeros is Step~2 of the proof of Proposition~\ref{middle entries}. To have a manageable union bound estimate, we need to be sure that the number of options for the potential subset to be deleted is of order $\exp(n)$ (so, $\exp(\ln 2 \cdot n^2)$ options for a general entry subset would be too much). 

Hence, also recalling Remark~\ref{regularization by row norm}, we emphasize that the norm estimate \eqref{main norm estimate} holds with the probability $1 - n^{0.1- r}$ as long as we achieve $L_2$-norm of all rows and columns bounded by $C\sqrt{c_\eps n}$  by zeroing out any product subset of the entries of $A$. 

\end{remark}

\section{Proof of Corollary~\ref{subblock regularization}}\label{corollary section}

General idea of the proof of Corollary~\ref{subblock regularization} is to show that after the regularization procedure all rows and columns of the matrix have well-bounded $L_2$-norms, and then apply Theorem~\ref{main}.

Originally, the core part of the algorithm (Step 2) was presented for Bernoulli random matrices in \cite{ReV, ReT}. We will use the following version of \cite[Lemma~5.1]{ReV} (based on the ideas developed in \cite[Proposition~2.1]{ReT}):
\begin{lemma}\label{constructive column cut}
 Let $B$ be a $n \times n$ matrix with independent $0$-$1$ valued entries, $\E B_{ij} \le p$. Then for any $ L \ge 10$ with probability $1 - \exp(-n \exp(-L pn))$
  the following holds. If we define 
   $$
W_{ij} := 
\begin{cases}
1,  &\text{if } e_j^{row}(B)  \le L np \text{ or } B_{ij} = 0, \\
L np/e_j^{row}(B) ,  &\text{otherwise.}
\end{cases}
  $$
  and $V_j := \prod_{i = 1}^n W_{ij}$ , and $J := \{j: V_j < 0.1\}$, then
  $$
  |J| \le n \exp(-L n p) \quad \text{ and } \, \sum_{j \in J^c} B_{ij} \le 10Lnp, \text{ for any } i \in [n] .
  $$
\end{lemma}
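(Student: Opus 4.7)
The plan is to prove the two claims separately, both relying on the key observation that the weight $V_j$ depends on column $j$ only through the column sum $\mu_j := e_j^{\mathrm{row}}(B) = \sum_i B_{ij}$. Indeed, $V_j = 1$ when $\mu_j \le Lnp$, and $V_j = (Lnp/\mu_j)^{\mu_j}$ otherwise, so $V_j < 0.1$ iff $\mu_j$ exceeds a threshold $T > Lnp$ determined by $(Lnp/T)^T = 0.1$. In particular the events $\{j \in J\}$ are mutually independent across $j$, since the columns of $B$ are independent.

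For the bound $|J| \le n\exp(-Lnp)$, a multiplicative Chernoff estimate applied to $\mu_j$ (a sum of independent Bernoullis with mean at most $np$) gives $\P(\mu_j > Lnp) \le (e/L)^{Lnp} \le \exp(-Lnp)$, the last step using $L \ge 10 > e^2$. Thus $|J|$ is stochastically dominated by $\mathrm{Binom}(n, \exp(-Lnp))$, and a second Chernoff bound yields $\P(|J| > n\exp(-Lnp)) \le \exp(-c\,n\exp(-Lnp))$ for a suitable absolute constant $c$, which will combine with the failure probability of the second claim.

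For the row bound $R'_i := \sum_{j \in J^c} B_{ij} \le 10 Lnp$ (for each $i$), the structural fact is that $j \in J^c$ forces $\mu_j \le T = O(Lnp)$. To decouple $B_{ij}$ from the event $\{j \in J^c\}$, I condition on the rows other than $i$: this fixes the partial column sums $\mu_j^{(-i)} := \mu_j - B_{ij}$, and the variables $\{B_{ij}\}_{j \in [n]}$ become independent Bernoullis of mean at most $p$ under the conditioning. Observing that $\{B_{ij} = 1,\, j \in J^c\} = \{B_{ij} = 1,\, \mu_j^{(-i)} \le T - 1\}$, we see that $R'_i$ reduces to a sum over the fixed set $S' := \{j : \mu_j^{(-i)} \le T - 1\}$ of independent Bernoullis with conditional mean at most $|S'| p \le np$. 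A Chernoff tail bound on this distribution, followed by a union bound over the $n$ rows, delivers the desired conclusion.

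The main obstacle is matching the claimed failure probability $\exp(-n\exp(-Lnp))$ for the row bound in regimes of moderate $Lnp$, where a direct per-row Chernoff plus union bound falls short by a polynomial factor in $n$. The resolution exploits the coupling between heavy rows and heavy columns: an unusually heavy row $i$ contributes to its columns' sums, pushing those columns above $T$ and thereby into $J$; this self-correcting mechanism reduces $R'_i$ precisely for the rows that would otherwise violate the bound. Carefully quantifying this coupling is the core of the weighting argument pioneered in Feige--Ofek~\cite{FO} and refined in Le--Levina--Vershynin~\cite{LV}, whose Bernoulli setting this lemma directly generalizes.
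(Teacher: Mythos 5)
The paper states this lemma without a self-contained proof, deferring to \cite[Lemma~5.1]{ReV} and \cite[Proposition~2.1]{ReT}, so there is no internal argument to compare yours against. Your structural observations are correct and are the right starting point: $V_j$ depends on $j$ only through the column sum $\mu_j$, membership in $J$ is equivalent to $\mu_j$ exceeding an integer threshold determined by the equation $T\ln(T/Lnp)=\ln 10$, and after conditioning out row $i$ the row-bound reduces to a Chernoff tail for independent Bernoullis. However, neither half of the proposal actually closes.

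For the cardinality bound, you over-round. From $\P\{\mu_j>Lnp\}\le (e/L)^{Lnp}$ you pass to $\le e^{-Lnp}$ and then claim $|J|$ is dominated by $\mathrm{Binom}(n,e^{-Lnp})$, whose \emph{mean} is exactly $ne^{-Lnp}$. A Chernoff bound for a binomial at its mean gives nothing; $\P\{|J|>ne^{-Lnp}\}$ is of order one, not $\exp(-c\,ne^{-Lnp})$. The fix is to keep the multiplicative slack that your first Chernoff step already produced: since $L\ge 10>e$ one has $(e/L)^{Lnp}\le e^{-1.3\,Lnp}$, and using $T>Lnp$ sharpens this to $\P\{j\in J\}\le 0.1\,e^{-1.3\,Lnp}$. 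Then $\E|J|\le 0.1\,ne^{-1.3\,Lnp}$ sits a constant factor below $ne^{-Lnp}$ and the binomial upper tail does give $\P\{|J|>ne^{-Lnp}\}\le e^{-ne^{-Lnp}}$. As written, though, your chain of inequalities destroys exactly the margin you need.

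For the row bound, you explicitly acknowledge that per-row Chernoff plus a union bound gives $n\,(e/(10L))^{10Lnp}$, which is not dominated by $\exp(-n\exp(-Lnp))$ in all regimes, and you gesture at a ``self-correcting coupling'' argument without carrying it out. This is the entire difficulty of the statement, so leaving it as a pointer to \cite{FO,LV} is a genuine gap, not a detail. Moreover, it is worth scrutinizing whether the claimed failure probability can cover the row-bound event at all: for $Lnp$ of moderate constant size (say $Lnp\approx 3.6$, which occurs in the paper's own application with $\eps=1/6$ and $l=0$), a single row independently exceeds $10Lnp$ ones with probability roughly $(e/(10L))^{10Lnp}$ --- a quantity that does not decrease with $n$ --- and, conditionally, all of those columns remain in $J^c$ with probability close to one. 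Thus the row-bound failure probability is at least of that constant order for each row, while $\exp(-n\exp(-Lnp))$ tends to zero as $n\to\infty$. This suggests that the stated probability $1-\exp(-n\exp(-Lnp))$ can pertain only to the cardinality claim, and the row-bound claim requires a separate (weaker) probability estimate --- which is precisely what your conditioned Chernoff plus union bound delivers. You should not expect the coupling heuristic to rescue the stronger bound; rather, the lemma's probability statement likely needs to be read (or corrected) as two separate estimates.
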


In order to pass from Bernoulli case to the general distribution case we are going to use some version of ``level truncation" idea once again. Note that here we need the probabilities of the levels $p_l$ to be both not too large (for the joint cardinality estimate) and not too small (for the probability estimate union bound). This motivates the idea to group ``similar size" entries $A_{ij}$ not by absolute value $|A_{ij}|$, but by common $2^{-l}$-quantiles of the distribution of $\xi \sim A_{ij}^2$.

\begin{remark}
A version of Corollary~\ref{subblock regularization} can be proved, when one would define the sets $\AA_l$ to contain all $A_{ij}$ such that $A_{ij}^2 \in (q_{k-1}, q_l]$,  where $q_l$ is $2^l$-th quantile of the distribution of $A_{ij}^2$, namely, 
\begin{equation}\label{quant}
q_l := \inf\{t: \P \{A^2_{ij} > t\} \le 2^{-l}\}.
\end{equation}

 The proof of this version is actually almost identical to the one presented below and gives smaller absolute constants. However, an additional requirement \emph{to know quantiles} of the distribution of the entries in order to regularize the norm of the matrix seems undesirable. So we are going to prove the distribution oblivious version as presented by Algorithm~1.
\end{remark}

The next lemma shows that the order statistics used in the statement of Corollary~\ref{subblock regularization} approximate quantiles of the distribution of $A_{ij}^2$.
\begin{lemma} [Order statistics as approximate quantiles]
Let $\hat A_1, \ldots \hat A_{n^2}$ be all the entries of $n \times n$ random matrix $A$ in an non-increasing order and $q_k$ be $2^{-k}$ quantiles of the distribution of $A_{ij}^2$ defined by \eqref{quant}.
 Then with probability at least $1 - 4\exp(-n^2 2^{-k_1 - 2})$  for all $k = 1, \ldots, k_1$
\begin{equation}\label{approx quantile estimate}
 q_{k-2} \le \hat A^2_{ \lceil n^2 2^{1-k}\rceil } \le q_k.
 \end{equation}
\end{lemma}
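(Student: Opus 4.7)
The plan is to rewrite both inequalities in \eqref{approx quantile estimate} as Binomial tail events about how many squared entries exceed fixed (deterministic) thresholds $q_k$ and $q_{k-2}$, and then apply standard Chernoff bounds followed by a union bound in $k$.

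Concretely, set $m_k := \lceil n^2 2^{1-k}\rceil$ and define the counts
\[
N_k^+ := \sum_{i,j} \ind_{\{A_{ij}^2 > q_k\}}, \qquad N_k^- := \sum_{i,j} \ind_{\{A_{ij}^2 \ge q_{k-2}\}}.
\]
Since $\hat A_m^2$ is the $m$-th largest value in $\{A_{ij}^2\}_{i,j}$, the event $\hat A_{m_k}^2 \le q_k$ is precisely $\{N_k^+ < m_k\}$, while $\hat A_{m_k}^2 \ge q_{k-2}$ is precisely $\{N_k^- \ge m_k\}$. By the definition \eqref{quant} of $q_\cdot$ we have $\P\{A_{ij}^2 > q_k\} \le 2^{-k}$, and a one-sided limit (using that for all $t < q_{k-2}$, $\P\{A_{ij}^2 > t\} > 2^{-(k-2)}$) gives $\P\{A_{ij}^2 \ge q_{k-2}\} \ge 2^{-(k-2)} = 4 \cdot 2^{-k}$. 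Thus $N_k^+$ and $N_k^-$ are sums of i.i.d. Bernoullis with
\[
\mu_k^+ := \E N_k^+ \le n^2 2^{-k}, \qquad \mu_k^- := \E N_k^- \ge 4 n^2 2^{-k},
\]
and $m_k$ sits comfortably between them, with $m_k \ge 2\mu_k^+$ and $m_k \le (\mu_k^- + 2)/2$.

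Next, I would apply multiplicative Chernoff bounds to each side: a standard form such as $\P\{\text{Binom}(N,p) \ge 2Np\} \le \exp(-Np/3)$ yields $\P\{N_k^+ \ge m_k\} \le \exp(-c\, n^2 2^{-k})$, and an analogous lower tail bound gives $\P\{N_k^- < m_k\} \le \exp(-c\, n^2 2^{-k})$ for an absolute constant $c > 0$. Summing these over $k=1,\ldots,k_1$ produces a geometric-like series $\sum_k 2\exp(-c n^2 2^{-k})$ dominated by its last term, which bounds the total failure probability by $4\exp(-c\, n^2 2^{-k_1})$.

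The only genuine obstacle is cosmetic: delivering exactly the constant $-n^2 2^{-k_1-2}$ that appears in the stated exponent, rather than an unspecified $-c\, n^2 2^{-k_1}$. The factor $1/4$ in the exponent is precisely the slack needed to absorb the geometric tail of the union bound, so it is a matter of picking a sharp enough Chernoff constant (or slightly loosening the comparison between $m_k$ and $\mu_k^\pm$). Note that continuity of the distribution of $A_{ij}^2$ is not needed anywhere, since the two-sided definition with strict and non-strict inequalities handles possible atoms at $q_k$ and $q_{k-2}$ automatically.
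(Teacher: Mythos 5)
Your proposal is correct and takes essentially the same route as the paper: both recast the two-sided bound as upper- and lower-tail events for the Binomial counts of squared entries exceeding the deterministic thresholds $q_k$ and $q_{k-2}$, apply Chernoff bounds at each level $k$, and union-bound over $k=1,\ldots,k_1$ using the fact that the geometric tail is dominated by the $k=k_1$ term. In fact you are slightly more careful at the lower threshold: your count $N_k^-$ uses the non-strict event $\{A_{ij}^2 \ge q_{k-2}\}$, which guarantees $\E N_k^- \ge 2^{-(k-2)}n^2$ even if the distribution has an atom at $q_{k-2}$, whereas the paper defines the analogous count $\nu_2$ via the strict inequality $A_{ij}^2 > q_{k-2}$, and its lower-tail Chernoff step then tacitly needs $\P\{A_{ij}^2 > q_{k-2}\}$ to remain close to $2^{-(k-2)}$.
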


\begin{proof}
A direct application of Chernoff's inequality shows that for any $k$ 
$$
\P \{ \nu_1 > \lceil n^2 2^{1-k} \rceil\} \le \exp(-n^2 2^{-k}/4),
$$where $\nu_1$ is a number of entries $A_{ij}$ such that $A_{ij}^2 > q_k$.
Another application of Chernoff's inequality lower bound shows that
$$
\P \{ \nu_2 < \lceil n^2 2^{1-k}\rceil \} \le \exp(-n^2 2^{-k}/4),
$$where $\nu_2$ is a number of entries $A_{ij}$ such that $A_{ij}^2 > q_{k-2}$.

Then, with probability at least $1 - 2 \exp(-n^2 2^{-k})$ the order statistics $\hat A^2_{n^2 2^{2-k}}$ is at least $q_{k-2}$ and at most $q_k$. Taking union bound, equation \eqref{approx quantile estimate} holds with probability $1 - 4 \exp(-n^2 2^{-k_1}/4)$ for all $k = 1, \ldots, k_1$.
\end{proof}
\begin{remark}\label{quantile estimation remark}
We will use $k_1 = \lceil \log_2 (8 n/\eps)\rceil$. An easy computation  using \eqref{approx quantile estimate} shows that 
$$q_{k_1 - l - 3} \le \hat A^2_{\lceil2^{4 + l - k_1} n^2 \rceil} \le \hat A^2_{\lceil 2^l n \eps\rceil} \le \hat A^2_{\lceil2^{3 + l - k_1} n^2 \rceil} \le q_{k_1- l - 2}$$ 
for all $l = 0, \ldots,  l_{\max}$ with probability $1 - 4 \exp(-n\eps/4)$. Then, for $\AA_l$ as defined in \eqref{aal} and all $l \le l_{\max}$,
\begin{equation}\label{plest}
\P\{A_{ij} \in \AA_l\} \le 2^{3 + l - k_1} \le 2^{l}\eps/n.
\end{equation}

\end{remark}

Now we are ready to prove Corollary~\ref{subblock regularization}.

\subsection{Proof of Corollary~\ref{subblock regularization}}
Let $k_1 = \lceil \log_2 (8 n/\eps)\rceil$, and $q_{k_1}$ is a corresponding $2^{-k_1}$ quantile of the distribution of $A_{ij}^2$ (as defined by~\eqref{quant}). It is easy to check by Chernoff's inequality that the total number of entries in $A$ such that $A_{ij}^2 \ge q_{k_1}$ is at most $\eps n/2$ with probability at least $1 - e^{-\eps n/4}$. So, all these ``large" entries will be replaced by zeros at the Step 1 of regularization Algorithm~1.

To prove Corollary~\ref{subblock regularization}, it is enough to show that:
\begin{enumerate}
\item Algorithm~1 makes all rows and columns of the truncated matrix $A \cdot \ind_{\{A_{ij}^2 < q_{k_1}\}}$ have norms bounded by $C\sqrt{c_\eps n}$. Then, in view of Remark~\ref{regularization by row norm_s}, we can apply Theorem~\ref{main} to conclude the desired norm estimate. 
\item cardinalities of the exceptional index subsets $|I|, |J| \le \eps n/2$ with high probability. Then the regularization procedure is indeed \emph{local}.
\end{enumerate}
The matrix $\bar A := A \cdot \ind_{\{A_{ij}^2 < q_{k_1}\}}$ is naturally decomposed into union of $l_{\max}$ ``levels" with the entries coming from sets $\AA_l$ and ``the leftover" part that contains $A_{ij}$ such that $A^2_{ij} < \hat A^2_{\lceil n \eps 2^{l_{\max}} \rceil}$. So,
$$
\bar A_{ij} = A_{ij} \ind_{\{A_{ij} \in \cup \AA_l\}} + A_{ij} \ind_{\{|A_{ij}| < \hat A_{\lceil n \eps 2^{l_{\max}}  \rceil}\}} =: A^{Large} + A^{Small}.
$$

 All the rows and columns of $A^{Small}$ have $L_2$-norms at most $C \sqrt{n r c_\eps}$ with probability at least $1 - n^{-r}$ (without any regularization). This follows from an application of Bernstein inequality (e.g., \cite[Theorem 2.8.4]{V-HDP}) for a sum of independent centered entries bounded by $\sqrt{Cn/\ln n}$. Indeed, we just need to check boundedness condition. Recall that $l_{\max} = \lfloor\log_2 (\ln n/\ln \eps^{-4})\rfloor$. By definition of quantiles $q_k$ and Markov's inequality
 $$
 \P\{A_{ij}^2 \ge q_{k_1 - 2 - l_{\max}} \} \ge 2^{-k_1 + 1} \frac{\ln n}{ \ln \eps^{-4}} \ge \P\{ A_{ij}^2 \ge \frac{32 c_\eps n}{\ln n}\}.
$$
Hence, the entries of $A^{Small}$ can be estimated from above 
$$
\hat A^2_{\lceil n \eps 2^{l_{\max}} \rceil} \le q_{k_1 - 2 - l_{\max}} \lesssim \frac{32 c_\eps n}{\ln n}.
$$

\bigskip

 For the part $A^{Large}$ we use Lemma~\ref{constructive column cut} applied to $n\times n$ matrices with i.i.d. entries $B^l_{ij} = \ind_{\{A_{ij} \in \AA_l\}}$ for each $l = 0, \ldots, l_{\max}$ with $L = 2c_{\eps}$ and $p_l = 2^{l}\eps/n$ (which is a valid choice by Remark~\ref{quantile estimation remark}). From the union bound estimate we can conclude that the statement of Lemma~\ref{constructive column cut} holds  for all $l \le l_{\max}$ with high probability
$$
1 - \sum_{l \le l_{\max}} \exp(-n \exp(-2c_{\eps} n 2^{l}\eps/n) )\ge 1 - \exp(-n^{0.5}).
$$

Recall that $\bar J = \cup_{l} J_l$ is the union of all exceptional column index subsets found for all matrices $A\cdot \ind_{\{A_{ij} \in \AA_l\}}$ with $l = 0, \ldots, l_{\max}$. Note that by the definition of quantiles and second moment condition, 
\begin{equation}\label{secmom}
\sum_{s=0}^{\infty} q_s 2^{-s-1} \le \E A_{ij}^2 \le 1.
\end{equation}
By Lemma~\ref{constructive column cut} we can estimate for every $i \in [n]$
\begin{align*}
\|row_i(A^{Large}_{[n] \times \bar J^c})\|_2^2 &\le \sum_{l \le l_{\max}} q_{k_1 - l - 2} 20 c_\eps n p_l \\
&\le \sum_{l \le l_{\max}} q_{k_1 - l - 2} 20 c_\eps n \frac{2^{l - k_1} \eps}{n} 2^{k_1} \le 160 c_{\eps} n,
\end{align*}
as $2^{k_1} \le 16 n/\eps$, we used \eqref{secmom} with $s = k_1 - l - 2$ in the last step.

Then, by the $L_2$-norm triangle inequality applied to the rows of $A_{[n] \times \bar J^c}^{Large}$ and $A^{Small}$, we have the row boundedness condition satisfied for $\bar A_{[n] \times \bar J^c}$. Next, on Step 3 we add the set $\hat J$ of columns with largest $L_2$-norms. The same argument as in Remark~\ref{regularization by row norm} shows that with probability at least $1- n^{-r}$ there are no columns with the norm larger than $C \sqrt{c_\eps n}$ outside the set $\hat J$. So, matrix $\tilde A_{[n] \times J^c}$ has all rows and columns norms well-bounded (recall that $J := \bar J \cup \hat J$). Then, by Theorem~\ref{main}, with high probability $1 - C (\ln \ln n)n^{- r}$
\begin{equation}\label{1}
\|\tilde A_{[n] \times J^c}\| \lesssim r^{3/2} \sqrt{c_{\eps} n \ln\ln n}.
\end{equation}
Repeating the same argument for the transpose, we have that  
 \begin{equation}\label{2}
 \|\tilde A_{I^c \times J}\| \le \|\tilde A_{I^c \times [n]}\| \lesssim r^{3/2} \sqrt{c_{\eps} n \ln\ln n}.
 \end{equation}
 Now we can combine \eqref{1} and \eqref{2} by triangle inequality for the operator norm to conclude the desired norm estimate for $\tilde A$ on the intersection of good events, namely, with probability 
 $$1 - \exp^{-\eps n/4} - n^{-r} - \exp(-n^{0.5}) -  2C (\ln \ln n)n^{- r} \ge 1 - n^{0.1-r}.$$

Finally, let us check that the regularization is local. Again by Lemma~\ref{constructive column cut}, the total number of exceptional columns
$$
|\bar J| = |\bigcup_l J_l| \le \sum_{l \le l_{\max}} n \exp(-2c_\eps n 2^{l}\eps/n) \le n \eps/4,
$$
since we are summing a geometric progression and $l \ge 0$. Since the same argument holds for the cardinality of $\bar I$, we can conclude that with high probability Algorithm~1 makes changes only in a $n \eps \times n \eps$ submatrix of $A$. This concludes the proof of Corollary~\ref{subblock regularization}.

\section{Discussion} \label{discussion section}

\subsection*{Regularization by the individual corrections of entries.}

Do we actually need to look at the rows and columns of $A$? A simpler and very intuitive idea would be to regularize the norm of $A$ just by zeroing out a few large entries of $A$. However, this approach does not work for the case when the entries have only two finite moments: for the efficient local regularization, one has to account for the mutual positions of the entries in the matrix, not only for their absolute values. 

Only in the case when $A_{ij}$ have \emph{more} than two finite moments the truncation idea works and it is not hard to derive the following result from known bounds on random matrices such as \cite{vH, Seginer, Auff} (see also the discussion in \cite[Section~1.4]{ReV}).

In the two moments case, individual correction of the entries can guarantee a bound with bigger additional factor $\ln n$ in the norm. It can be derived from known general bounds on random matrices, such as the matrix Bernstein's inequality (\cite{Tropp}). One would apply the matrix Bernstein's inequality for the entries truncated at level $\sqrt{n}$ to get that $\| \tilde A \| \le \eps^{-1/2} \sqrt{n} \cdot \ln n$.

We consider Theorem~\ref{main} more advantageous with respect to the individual corrections approach not only because we are able to bound the norm closer to the optimal order $\sqrt{n}$, but also due to the fact that it gives more adequate information about the obstructions to the ideal norm bound. Namely, they are not only in the entries that are too large, but also in the rows and columns that accumulate too many entries (all of which are, potentially, of average size).

\subsection*{Symmetry assumption}
An assumption that the entries of $A$ has to have symmetric distribution does not look natural and potentially can be avoided. We need it in the current argument to keep zero mean after various truncations by absolute value (in \eqref{general split} and also in \eqref{est_light}). The standard symmetrization techniques (see [\cite{LT}, Lemma 6.3]) would not work in this case since we combine convex norm function with truncation (zeroing out of a product subset), which is not convex. 

\subsection*{Dependence on $n$}
Another potential improvement is an extra $\sqrt{\ln \ln n}$ factor on the optimal $n$-order $\|\tilde{A} \| \sim \sqrt{n}$. The reason for its appearance in our proof is that we consider restrictions of $A$ to the discretization ``levels" independently, and independently estimate their norms. The second moment assumption gives us that $\sum 2^{2k} p_k \sim 1$. However, the best we can hope for a norm of one ``level" (after proper regularization) is $2^k \sqrt{n p_k}$ (since this is an expected $L_2$-norm of a restricted row). Thus, we end up summing square roots of the converging series, $\sum (2^{2k} p_k)^{1/2}$, which for some distributions is as large as square root of the number of summands ($\ln \ln n$ in our case). 

It would be desirable to remove extra $\sqrt{\ln \ln n}$ term and symmetric distribution assumption, proving something like the following
\begin{conjecture}
  Consider an $n \times n$ random matrix $A$ with i.i.d. mean zero entries such that $\E A_{ij}^2 = 1$. Let $\tilde{A}$ be the matrix that obtained from $A$ by zeroing out all rows and columns such that
  \begin{equation}\label{deletion}
   \|row_i(A)\|_m \ge C\E \|row_i(A)\|_m, \quad
   \|col_i(A)\|_m \ge C\E \|col_i(A)\|_m
  \end{equation}
for some $L_m$-norm to be specified (e.g. $m = 2$). Then with probability $1  - o(1)$ the operator norm satisfies $\|\tilde{A}\| \le C' \sqrt{n}$.
\end{conjecture}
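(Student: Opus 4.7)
The plan is to replace the per-level discretization of Theorem~\ref{main} (the source of the $\sqrt{\ln\ln n}$ loss) by a single-scale truncation, to handle the ``large'' entries via the sparsity they inherit from $L_2$-regularization, and to drop the symmetry assumption via a deterministic mean-correction. Concretely, fix $T := C_0\sqrt n$ and write $A = S + L$ with $S_{ij} = A_{ij}\ind_{\{|A_{ij}|\le T\}}$. Since $\E A_{ij}=0$, Markov's inequality gives $|\E S_{ij}| = |\E A_{ij}\ind_{\{|A_{ij}|>T\}}| \le 1/T$, so the deterministic recentering matrix $\E S$ has operator norm $\|\E S\| \le n/T \lesssim \sqrt n$. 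The same argument applied to $L$ lets one work throughout with the centered matrices $S-\E S$ and $L-\E L$, removing the need for a symmetric distribution.

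\textbf{Bounding $\tilde L$ via sparsity.} After zeroing out all rows and columns with $\|row_i(A)\|_2 > C\sqrt n$, any surviving row of $L$ contains at most $C^2 n/T^2 = O(1)$ entries of magnitude exceeding $T$, and the surviving row $\ell_2$-norm remains $\le C\sqrt n$; the same holds for columns. Hence $\tilde L$ has constant row- and column-sparsity $d$, and each row/column has $\ell_2$-norm of order $\sqrt n$. Cauchy--Schwarz applied column-wise gives $\|\tilde L\|_{1\to 1} = \max_j \|col_j(\tilde L)\|_1 \le \sqrt{d}\cdot C\sqrt n \lesssim \sqrt n$, and likewise $\|\tilde L\|_{\infty\to\infty} \lesssim \sqrt n$, so Riesz--Thorin interpolation yields $\|\tilde L\| \le \sqrt{\|\tilde L\|_{1\to 1}\,\|\tilde L\|_{\infty\to\infty}} \lesssim \sqrt n$.

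\textbf{The main obstacle.} The chief difficulty is bounding $\|\tilde S\|$ after the same regularization. A direct application of Theorem~\ref{theor: bvh} gives $\|S - \E S\| \lesssim \sqrt n + T\sqrt{\log n} = O(\sqrt{n\log n})$, since $T = \sqrt n$ forces a $\sqrt{\log n}$ loss in the generic matrix concentration bound. To close the gap to $\sqrt n$ I would try to upgrade Bandeira--van Handel using recent sharp non-commutative Khintchine-type bounds (Brailovskaya--van Handel, Bandeira--Boedihardjo--van Handel) that refine the $\sigma_*$ term into a local quantity tied to the regularized row/column variance profile; showing that the $L_2$ row/column regularization forces the effective $\sigma_*$ of the truncated-and-regularized $\tilde S$ to be $O(1)$ rather than $O(\sqrt n)$ is, in my view, the real content of the conjecture and its main obstacle. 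A fallback would be to set $T = \sqrt{n/\log n}$, which makes $\|S-\E S\| = O(\sqrt n)$ painless, and to compensate by applying a single-shot Feige--Ofek / Le--Levina--Vershynin argument to the intermediate-scale entries that treats them jointly rather than summing Bernoulli estimates over $\log\log n$ discretization levels. Choosing a larger $m$ in the statement (e.g.\ $m=\infty$, which is essentially the threshold-truncation criterion) makes Step 2 easier but removes more rows/columns than desired, so identifying the ``right'' $m$ is part of the obstacle.
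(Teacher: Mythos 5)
This statement is labeled a \emph{Conjecture} in the paper and has no proof there --- it is posed as an open problem, precisely because removing both the $\sqrt{\ln\ln n}$ loss and the symmetry hypothesis simultaneously was out of reach. Your proposal is honest about that: it fills in two of the easier ingredients and explicitly names the remaining gap. Two pieces of your argument are genuine progress beyond the paper. The deterministic mean-correction ($|\E S_{ij}| \le 1/T$, hence $\|\E S\| \le n/T \lesssim \sqrt n$ at $T = C_0\sqrt n$) is a clean route around the symmetry assumption, and the bound on $\tilde L$ via $O(1)$ row and column sparsity together with Riesz--Thorin (equivalently, the Schur test) is tight and elementary.

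However, the gap you flag in bounding $\tilde S$ is not mere slack in a concentration inequality --- it is structural, and it is worth saying exactly why. Applying Theorem~\ref{theor: bvh} to the unregularized truncation $S$ is a dead end: for entries taking the values $\pm\sqrt n$ with probability $1/(2n)$ each and $0$ otherwise, one has $S = A$, and the Krivelevich--Sudakov phenomenon makes $\|S\| = \Theta\bigl(\sqrt{n\ln n/\ln\ln n}\bigr)$ with substantial probability, so no inequality for the full $S$ can deliver $O(\sqrt n)$. The row/column regularization must enter the estimate for $\tilde S$ itself, which forces a union bound over the random deleted product subset (as in Remark~\ref{regularization by row norm_s}) coupled to a Feige--Ofek-type counting argument. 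Your fallback $T = \sqrt{n/\ln n}$ does make the small part painless, but then $d = C^2 n/T^2 = \Theta(\ln n)$ and your Riesz--Thorin bound on $\tilde L$ degrades to $\sqrt{n\ln n}$, so you are forced to split the range $(\sqrt{n/\ln n}, \sqrt n\,]$ further --- which is exactly the paper's middle-scale decomposition with its $\log_2\ln n$ levels and the attendant $\sqrt{\ln\ln n}$. The missing ingredient --- a ``single-shot'' Feige--Ofek argument treating $\Theta(\ln\ln n)$ magnitude levels jointly, or a refinement making $\sigma_*$ in Theorem~\ref{theor: bvh} local to the regularized profile --- is precisely the open content of the conjecture, and the proposal does not supply it.
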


Note that this result would be somewhat similar to the estimate proved by Seginer (\cite{Seginer}): in expectation, the norm of the matrix with i.i.d. elements is bounded by the largest norm of its row or column. However, note that after cutting ``heavy" rows and columns we lose independence of the entries in the resulting matrix. And in general, the question of the norm regularization is not equivalent to another interesting question about the sufficient conditions on the distribution of the entries that ensure an optimal order of the operator norm.

\end{document}